\documentclass[11pt]{amsart}
\usepackage{mathrsfs,eucal,amsmath,amssymb,latexsym,dsfont,stmaryrd,enumerate}
\usepackage{tikz-cd}
\usepackage{tikz}
\usepackage{enumerate}
\usepackage[all,2cell]{xy} \UseAllTwocells \SilentMatrices
\usepackage[T1]{fontenc}
\usepackage{xcolor}
\usepackage{hyperref}
\setlength{\textheight}{23cm}
\setlength{\textwidth}{15cm}
\setlength{\topmargin}{-0.8cm}
\hoffset=-1.4cm

\begin{document}

\newcommand{\INVISIBLE}[1]{}

\newtheorem{thm}{Theorem}[section]
\newtheorem{lem}[thm]{Lemma}
\newtheorem{cor}[thm]{Corollary}
\newtheorem{prp}[thm]{Proposition}
\newtheorem{conj}[thm]{Conjecture}

\theoremstyle{definition}
\newtheorem{dfn}[thm]{Definition}
\newtheorem{question}[thm]{Question}
\newtheorem{nota}[thm]{Notations}
\newtheorem{notation}[thm]{Notation}
\newtheorem*{claim*}{Claim}
\newtheorem{ex}[thm]{Example}
\newtheorem{counterex}[thm]{Counter-example}
\newtheorem{rmk}[thm]{Remark}
\newtheorem{rmks}[thm]{Remarks}
 
\def\labelenumi{(\arabic{enumi})}

\newcommand{\aro}{\longrightarrow}
\newcommand{\arou}[1]{\stackrel{#1}{\longrightarrow}}
\newcommand{\RA}{\Longrightarrow}

\newcommand{\mm}[1]{\mathrm{#1}}
\newcommand{\bm}[1]{\boldsymbol{#1}}
\newcommand{\bb}[1]{\mathbf{#1}}

\newcommand{\bA}{\boldsymbol A}
\newcommand{\bB}{\boldsymbol B}
\newcommand{\bC}{\boldsymbol C}
\newcommand{\bD}{\boldsymbol D}
\newcommand{\bE}{\boldsymbol E}
\newcommand{\bF}{\boldsymbol F}
\newcommand{\bG}{\boldsymbol G}
\newcommand{\bH}{\boldsymbol H}
\newcommand{\bI}{\boldsymbol I}
\newcommand{\bJ}{\boldsymbol J}
\newcommand{\bK}{\boldsymbol K}
\newcommand{\bL}{\boldsymbol L}
\newcommand{\bM}{\boldsymbol M}
\newcommand{\bN}{\boldsymbol N}
\newcommand{\bO}{\boldsymbol O}
\newcommand{\bP}{\boldsymbol P}
\newcommand{\bY}{\boldsymbol Y}
\newcommand{\bS}{\boldsymbol S}
\newcommand{\bX}{\boldsymbol X}
\newcommand{\bZ}{\boldsymbol Z}

\newcommand{\cc}[1]{\mathcal{#1}}
\newcommand{\ccc}[1]{\mathscr{#1}}

\newcommand{\ca}{\cc{A}}

\newcommand{\cb}{\cc{B}}

\newcommand{\cC}{\cc{C}}

\newcommand{\cd}{\cc{D}}

\newcommand{\ce}{\cc{E}}

\newcommand{\cf}{\cc{F}}

\newcommand{\cg}{\cc{G}}

\newcommand{\ch}{\cc{H}}

\newcommand{\ci}{\cc{I}}

\newcommand{\cj}{\cc{J}}

\newcommand{\ck}{\cc{K}}

\newcommand{\cl}{\cc{L}}

\newcommand{\cm}{\cc{M}}

\newcommand{\cn}{\cc{N}}

\newcommand{\co}{\cc{O}}

\newcommand{\cp}{\cc{P}}

\newcommand{\cq}{\cc{Q}}

\newcommand{\cR}{\cc{R}}

\newcommand{\cs}{\cc{S}}

\newcommand{\ct}{\cc{T}}

\newcommand{\cu}{\cc{U}}

\newcommand{\cv}{\cc{V}}

\newcommand{\cy}{\cc{Y}}

\newcommand{\cw}{\cc{W}}

\newcommand{\cz}{\cc{Z}}

\newcommand{\cx}{\cc{X}}

\newcommand{\g}[1]{\mathfrak{#1}}

\newcommand{\af}{\mathds{A}}
\newcommand{\PP}{\mathds{P}}

\newcommand{\GL}{\mathrm{GL}}
\newcommand{\PGL}{\mathrm{PGL}}
\newcommand{\SL}{\mathrm{SL}}
\newcommand{\NN}{\mathds{N}}
\newcommand{\ZZ}{\mathds{Z}}
\newcommand{\CC}{\mathds{C}}
\newcommand{\QQ}{\mathds{Q}}
\newcommand{\RR}{\mathds{R}}
\newcommand{\FF}{\mathds{F}}
\newcommand{\DD}{\mathds{D}}
\newcommand{\VV}{\mathds{V}}
\newcommand{\HH}{\mathds{H}}
\newcommand{\MM}{\mathds{M}}
\newcommand{\OO}{\mathds{O}}
\newcommand{\LL}{\mathds L}
\newcommand{\BB}{\mathds B}
\newcommand{\kk}{\mathds k}
\newcommand{\bs}{\mathbf S}
\newcommand{\GG}{\mathds G}
\newcommand{\XX}{\mathds X}

\newcommand{\WW}{\mathds W}
\newcommand{\al}{\alpha}

\newcommand{\be}{\beta}

\newcommand{\ga}{\gamma}
\newcommand{\Ga}{\Gamma}

\newcommand{\om}{\omega}
\newcommand{\Om}{\Omega}

\newcommand{\vt}{\vartheta}
\newcommand{\te}{\theta}
\newcommand{\Te}{\Theta}

\newcommand{\ph}{\varphi}
\newcommand{\Ph}{\Phi}

\newcommand{\ps}{\psi}
\newcommand{\Ps}{\Psi}

\newcommand{\ep}{\varepsilon}

\newcommand{\vr}{\varrho}

\newcommand{\de}{\delta}
\newcommand{\De}{\Delta}

\newcommand{\la}{\lambda}
\newcommand{\La}{\Lambda}

\newcommand{\ka}{\kappa}

\newcommand{\si}{\sigma}
\newcommand{\Si}{\Sigma}

\newcommand{\ze}{\zeta}

\newcommand{\fr}[2]{\frac{#1}{#2}}
\newcommand{\vs}{\vspace{0.3cm}}
\newcommand{\na}{\nabla}
\newcommand{\pd}{\partial}
\newcommand{\po}{\cdot}
\newcommand{\met}[2]{\left\langle #1, #2 \right\rangle}
\newcommand{\rep}[2]{\mathrm{Rep}_{#1}(#2)}
\newcommand{\repo}[2]{\mathrm{Rep}^\circ_{#1}(#2)}
\newcommand{\hh}[3]{\mathrm{Hom}_{#1}(#2,#3)}
\newcommand{\modules}[1]{#1\text{-}\mathbf{mod}}
\newcommand{\vect}[1]{#1\text{-}\mathbf{vect}}

\newcommand{\Modules}[1]{#1\text{-}\mathbf{Mod}}
\newcommand{\dmod}[1]{\mathcal{D}(#1)\text{-}{\bf mod}}
\newcommand{\spc}{\mathrm{Spec}\,}
\newcommand{\an}{\mathrm{an}}
\newcommand{\NNo}{\NN\smallsetminus\{0\}}

\newcommand{\pos}[2]{#1\llbracket#2\rrbracket}

\newcommand{\lau}[2]{#1(\!(#2)\!)}

\newcommand{\cpos}[2]{#1\langle#2\rangle}

\newcommand{\id}{\mathrm{id}}

\newcommand{\one}{\mathds 1}

\newcommand{\ti}{\times}
\newcommand{\tiu}[1]{\underset{#1}{\times}}

\newcommand{\ot}{\otimes}
\newcommand{\otu}[1]{\underset{#1}{\otimes}}

\newcommand{\wh}{\widehat}
\newcommand{\wt}{\widetilde}
\newcommand{\ov}[1]{\overline{#1}}
\newcommand{\un}[1]{\underline{#1}}

\newcommand{\op}{\oplus}

\newcommand{\lid}{\varinjlim}
\newcommand{\lip}{\varprojlim}

\newcommand{\mcrs}{\bb{MC}_{\rm rs}}
\newcommand{\mclog}{\bb{MC}_{\rm log}}
\newcommand{\mc}{\mathbf{MC}}

\newcommand{\ega}[3]{[EGA $\mathrm{#1}_{\mathrm{#2}}$, #3]}

\newcommand{\asts}{\begin{center}$***$\end{center}}
\title[Prolongation of regular-singular connections]{Prolongation of regular-singular connections on punctured affine line over a Henselian ring}

\author[Hai]{Ph\`ung H\^o Hai}
\address[Hai]{Institute of Mathematics, Vietnam Academy of Science and Technology, 18 Hoang Quoc Viet, Cau Giay, Hanoi}
\email{phung@math.ac.vn}

\author[dos Santos]{Jo\~ao Pedro dos Santos}
\address[dos Santos]{Institut Montpelli\'erain A. Grothendieck,  
Case courrier 051,
Place Eug\`ene Bataillon,
34090 Montpellier, France }
\email{joao\_pedro.dos\_santos@yahoo.com}

\author[Tam]{Ph\d am Thanh T\^am}
\address[T\^ am]{Department of Mathematics, Hanoi Pedagogical University 2, Vinh Phuc, Vietnam}   
\email{phamthanhtam@hpu2.edu.vn}

\author[Thinh]{\DJ\`ao V\u{a}n Th\d inh}
\address[Thinh]{Institute of Mathematics, Vietnam Academy of Science and Technology, 18 Hoang Quoc Viet, Cau Giay, Hanoi}
\email{daothinh1812@gmail.com}

\keywords{Regular-singular connection, Henselian discrete valuation ring, Deligne's equivalence (MSC 2020: 12H05, 13N15, 18M05).}

\begin{abstract}
We generalize Deligne's equivalence between the categories of regular-singular connections on the formal punctured disk and on the punctured affine line   to the case where the base is a  strictly Henselian discrete valuation ring  of equal characteristic 0. We also provide a weaker result when the base is higher dimensional.   
\end{abstract}
\thanks{The research of Ph\`ung H\^o Hai and Ph\d am Thanh T\^am is funded by  the International Center for Research and Postgraduate Training in Mathematics (Institute of Mathematics, VAST, Vietnam) under	grant number  \texttt{ICRTM01\_2020.06} and funded by Vingroup Joint Stock Company and supported by Vingroup Innovation Foundation (VinIF) under the project code VINIF.2021.DA00030.
}
\thanks{The research of \DJ\`ao V\u an Th\d inh was supported by the Postdoctoral program of Vietnam Institute for Advanced Study in Mathematics
}
\date{\today}
\maketitle

\section{Introduction}
Let $C$ be an algebraically closed field of characteristic 0 and $x$ be a variable.  The formal punctured disk,  ${\rm Spec}\,\lau Cx$,  is equipped with the ``vector field'' $\vt:=x\dfrac{d}{dx}$. In \cite[Proposition~13.35]{Del87}, Deligne established an  equivalence between regular-singular connections on the formal punctured disk  and on the punctured affine line $\PP_C^1\smallsetminus \{0,\infty\}$:
\[
\left\{\begin{array}{c}\text{Regular-singular connections}\\\text{on the punctured formal disk}\end{array}\right\}\simeq\left\{\begin{array}{c}\text{Regular-singular connections}\\\text{on the punctured affine line}\end{array}\right\}.
\] 
 Deligne uses this equivalence to produce ``fiber functors'' from the category of regular-singular connections on the formal punctured disk, the   tangential fiber functors. 

Deligne's equivalence was also considered by Katz in a more general setting \cite{Kat87}. The analogs in characteristic $p$ were essentially established by Gieseker in \cite{Gie75} and further developed  by Kindler in \cite{Kin15}. There is also a generalization to the $p$-adic setting by Matsuda \cite{Mat02}, see also \cite{And02}. 

If we now replace $C$ by a complete local noetherian $C$--algebra $R$, Deligne's equivalence possesses a clear analogue, which was proved in \cite[Theorem 10.1]{HdST22}. 
The main idea behind the proof of this result is to make use of the fact that $R$ is a limit of finite dimensional local $C$-algebras and then, turning attention to objects with ``truncated actions'' of $R$, to ``pass to the limit.'' To be more specific, let $\g r$ stand for the maximal ideal of $R$, so that $R_k:=R/\g r^{k+1}$ is a finite $C$-algebra. Now, given a $C$-linear category $\g C$ (such as the category of connections on the punctured affine line), we restrict our attention to the category $\g C_{(R_k)}$ of objects in $\g C$ which have an action of $R_k$. If $\g C'$ is 
another $C$-linear category (e.g. the category of connections on the formal punctured disk)
and if we are able to obtain compatible equivalences $\g C_{(R_k)}\simeq\g C_{(R_k)}'$, it is to be hoped that ``passing to the limit'' will give us a menas to produce equivalences of $R$-linear categories. Clearly, this idea relies heavily on the completeness of $R$. 


In this manuscript, we   deal with the case where $R$ is a noetherian Henselian local $C$--algebra. Our main results are   Theorem \ref{thm.20220623--01} and Theorem \ref{cor.deligne.equivalence.hensel}. In a nutshell:
\begin{thm}\label{main_theorem}
Let $\mcrs(*/R)$ denote the category of regular-singular connections on $*$  over $R$, and 
$\mcrs^{{\circ}}(*/R)$ denote the full subcategory of objects whose   underlying modules are $R$-flat.
Then, the restriction functor 
\[
\bb r: \mcrs^{{\circ}}(R[x^\pm]/R)\aro  \mcrs^{{\circ}}(\lau Rx/R)
\] 
is an equivalence provided that $R$ is a G-ring. If $R$ is, moreover, a discrete valuation ring, then   
\[
\bb r: \mcrs(R[x^\pm]/R)\aro  
\mcrs(\lau Rx/R)
\] 
is  an equivalence. 
\end{thm}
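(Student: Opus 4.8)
The plan is to deduce the discrete valuation ring statement from the $G$-ring statement by d\'evissage on the $R$-torsion, and to prove the $G$-ring statement by descending the complete case \cite[Theorem~10.1]{HdST22} along the faithfully flat map $R\to\wh R$; it is precisely here that the $G$-ring hypothesis enters, through Popescu's theorem (Artin approximation) for the Henselian local ring $R$.

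For the $G$-ring statement the functor $\bb r$ is exact and faithful, so only fullness and essential surjectivity need attention. For fullness I would use that, for $M,N\in\mcrs^{\circ}(R[x^\pm]/R)$, the $R$-module $\mathrm{Hom}(M,N)$ of horizontal homomorphisms is the zeroth relative de Rham cohomology of the internal hom connection $M^\vee\ot N$, and likewise after applying $\bb r$; since the relevant de Rham complexes consist of $R$-flat modules, these cohomologies commute with $(-)\ot_R\wh R$ (a point to check on the series side, since $\lau Rx\ot_R\wh R\neq\lau{\wh R}x$, but a horizontal section involves only the finitely many resonant Taylor orders of the connection and so lies over the subring $\lau Rx\ot_R\wh R$), compatibly with $\bb r$. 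Over $\wh R$ the comparison map is bijective by \cite[Theorem~10.1]{HdST22}, so faithful flatness of $R\to\wh R$ gives it over $R$.

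For essential surjectivity, let $N\in\mcrs^{\circ}(\lau Rx/R)$. By \cite[Theorem~10.1]{HdST22} the connection $N\ot_{\lau Rx}\lau{\wh R}x$ is algebraizable, i.e.\ extends to a vector bundle on $\PP^1_{\wh R}$ carrying a logarithmic connection along $\{0,\infty\}$. I would then encode the datum, over a variable $R$-algebra $T$, of such an algebraic model of $N$ together with a comparison with the fixed logarithmic lattice $\mathcal N\subset N$ over $\pos Rx$ as a functor $F$ on $R$-algebras, check that $F$ is locally of finite presentation, observe $F(\wh R)\neq\emptyset$, and invoke Artin approximation for the Henselian $G$-ring $R$ to obtain $F(R)\neq\emptyset$, that is, an $M\in\mcrs^{\circ}(R[x^\pm]/R)$ together with an isomorphism $\bb r M\cong N$. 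The hard part — the main obstacle — is the verification that $F$ is of finite presentation: the splitting type of the bundle on $\PP^1$ is locally constant, hence bounded, and, crucially, the comparison isomorphism between two logarithmic lattices of a regular-singular connection is reconstructed from a bounded jet of it after the residue normalizations carried out in the preceding sections, so that this comparison really is finite-type data.

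For the discrete valuation ring statement, $R$ is in particular still a $G$-ring, so the first part applies to the $R$-flat objects. Given $N\in\mcrs(\lau Rx/R)$, the $R$-torsion submodule $N_{\mathrm t}\subseteq N$ is stable under $\nabla_{\vt}$ (the operator $\nabla_{\vt}$ being $R$-linear) and is killed by some $\g r^{m}$ (as $R$ is one-dimensional), so
\[
0\aro N_{\mathrm t}\aro N\aro N_{\mathrm f}\aro 0
\]
is exact in $\mcrs(\lau Rx/R)$ with $N_{\mathrm f}$ torsion-free, hence $R$-flat. By the first part $N_{\mathrm f}\cong\bb r M_{\mathrm f}$ with $M_{\mathrm f}\in\mcrs^{\circ}(R[x^\pm]/R)$; and $N_{\mathrm t}$, being killed by $\g r^{m}$, is an object of $\mcrs(\lau{R_m}x/R_m)$ over the Artinian local $C$-algebra $R_m:=R/\g r^{m}$, so $N_{\mathrm t}\cong\bb r M_{\mathrm t}$ with $M_{\mathrm t}\in\mcrs(R_m[x^\pm]/R_m)\subseteq\mcrs(R[x^\pm]/R)$ by \cite[Theorem~10.1]{HdST22}. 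To descend the extension, note that the $\mathrm{Ext}^1$-groups that occur — of the form $\mathrm{Ext}^1(-,M_{\mathrm t})$ and $\mathrm{Ext}^1(M_{\mathrm f},-)$, with the first argument $R[x^\pm]$-locally free — are computed by the first relative de Rham cohomology of a $\g r^{m}$-torsion internal hom, hence live over $R_m$, where $\bb r$ is an isomorphism on them by \cite[Theorem~10.1]{HdST22}; therefore the class of $N$, viewed as an extension of $\bb r M_{\mathrm f}$ by $\bb r M_{\mathrm t}$, is the image under $\bb r$ of a class over $R[x^\pm]$, producing $M\in\mcrs(R[x^\pm]/R)$ with $\bb r M\cong N$. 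Full faithfulness of $\bb r$ on all of $\mcrs(R[x^\pm]/R)$ follows from the same d\'evissage applied to the $\mathrm{Hom}$–$\mathrm{Ext}$ exact sequences, using the first part for the torsion-free pair, \cite[Theorem~10.1]{HdST22} over $R_m$ for the torsion pair, and the vanishing of $\mathrm{Hom}$ and of the relevant $\mathrm{Ext}^1$ from a torsion object to a torsion-free one. Apart from the finite-presentation point, every step of this last argument is formal once the complete case and the structural results on logarithmic lattices are available.
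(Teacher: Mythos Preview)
Your high-level strategy—descend \cite[Theorem~10.1]{HdST22} along $R\to\wh R$ via Popescu—is also the paper's, but the execution is quite different and your version has real gaps. The paper does not argue with a moduli functor or with de~Rham cohomology of internal Homs; it passes through the auxiliary category $\bb{End}_R$ and the Euler functors $\ga\,\mm{eul}$. On the formal side, shearing over the Henselian ring (the Jordan-type decomposition of Lemma~\ref{lem.202307--01}) together with faithful flatness of $\pos Rx\to\pos{\wh R}x$ shows that every object of $\mcrs^\circ(\lau Rx/R)$ is of Euler form (Theorem~\ref{thm.2022070-8--01}); this already gives essential surjectivity of $\bb r$. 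On the algebraic side (Theorem~\ref{prop.20220607--03}), Popescu is used concretely: the Euler form of $\wh R[x^\pm]\ot M$ is spread to a smooth $S_\la/R$ and pulled back along a section $S_\la\to R$ supplied by Hensel. Fullness is then a one-liner: a morphism between two Euler connections is a matrix over $\lau Rx$ which, by the complete case, has entries in $\wh R[x^\pm]$, and $\lau Rx\cap\wh R[x^\pm]=R[x^\pm]$. For the DVR case the paper does not split into torsion and torsion-free pieces; it shows instead that every object is a \emph{quotient} of a flat one (Proposition~\ref{prop.20230730}, Corollary~\ref{18.09.2023--1}) and lifts morphisms through such resolutions.

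Two points in your argument are genuine gaps rather than routine checks. First, you flag but do not carry out the finite-presentation verification for your functor $F$; the comparison isomorphism between two logarithmic lattices of $N$ lives over $\pos Tx$, and bounding its jet order is exactly the exponent-normalization work of Section~\ref{sect2}, so you have not bypassed that work, only postponed it. Second, your fullness step needs $H^0_{\mm{dR}}$ on the series side to commute with $-\ot_R\wh R$, but $\lau Rx\ot_R\wh R\subsetneq\lau{\wh R}x$; your remedy, that a horizontal section involves only finitely many resonant Taylor orders, is literally the computation in the proof of Theorem~\ref{thm.2022070-8--01}, which presupposes the Euler form over $R$. In both places you are implicitly using the structural results the paper proves first. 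Your torsion/torsion-free d\'evissage for the DVR case is plausible, but note that computing $\mm{Ext}^1$ in $\mcrs$ by a two-term de~Rham complex requires the first argument to be locally free over $\lau Rx$, which on the series side again rests on the Euler form (Theorem~\ref{thm.230704}).
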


The relevance  of this result is twofold. On the one hand, according to Deligne's point of view \cite{Del87},
it produces  fiber functors for the category $\mcrs(\lau Rx/R)$: we compose
the aforementioned equivalence   with a fiber functor at an $R$-point of $\mm{Spec}\, R[x^\pm]$. 
(Deligne calls these  ``tangential'' fiber  functors.) On the other hand, this 
equivalence describes the structure of $\mcrs(R[x^\pm]/R)$ in terms of 
$\mcrs(\lau Rx/R)$, which is easier to grasp.
Finally, the reduction from a complete noetherian local ring to a noetherian 
Henselian local ring should be an important step toward the case of an arbitrary noetherian local ring. (It is perhaps useful to observe that the class of G-rings is a broad and reasonable one. More on this will be found in the body of the text.)

Our approach is based on 
Deligne's equivalence as presented in   \cite[Theorem 10.1]{HdST22} and Popescu approximation. While the proof of 
\cite[Theorem 10.1]{HdST22} relied on the accessory category of representations of the group $\ZZ$, we have found no reasonable way to include this actor in the present picture; instead, we have made use of its ``Lie version'', which is the category of endomorphisms of $R$-modules. The part concerning Popescu approximation is of course important, but its employment is more straightforward.

The paper is organized as follows.  
 Section \ref{sect2} is devoted to the category of regular-singular connections on the formal relative punctured disk. 
We show that each  connection on a flat $R$-module admits an Euler form. Section \ref{sect.4} is devoted to the category of regular-singular connections on the punctured relative affine line. Similarly, we show that a connection on an $R$-flat module admits an Euler form. The results obtained in these two sections are then used to prove Theorem \ref{main_theorem} in Section \ref{sect.5}.

\subsection{Notation and conventions}\label{NC}
\begin{enumerate}
\item 
[$C$] is a fixed algebraically closed field of characteristic $0$.
\item[$R$] is an integral noetherian local   $C$-algebra with maximal ideal $\frak r$ and residue field isomorphic to $C$.
\item [$R_k$] is the truncation $R/\g r^{k+1}$. 
\item[$\wh R$] is the $\mathfrak r$--adic completion of $R$.  
\item[$\lau Rx$] denotes the ring of formal Laurent series with coefficients in $R$: we have   $\lau Rx=\pos Rx[x^{-1}]$.
\item[$(0:\g a)_M$] is the submodule of all $m\in M$ annihilated by the ideal $\g a$. If $\g a=(a)$, we shall abbreviate $(0:\g a)_M$ to $(0:a)_M$.  
\item[$\vartheta$] denotes  $R$-linear derivation on  $\lau Rx$ given by
$$\vt\sum a_nx^n=x\frac{d}{dx} \sum a_nx^n=\sum na_nx^n.$$

\item[$\mm{Sp}_\ph$] denotes the spectrum of the endomorphism $\ph:V\to V$  of  vector space over $C$.

\item[$\tau$] denotes a subset  of $C$ 
such that the natural map $\tau\to C/\ZZ$ is bijective. In some cases, we shall assume that $0\in\tau$.  
\item[$\bb{End}_R$] denotes  the category of couples $(V,A)$, consisting of a finite $R$-module $V$ and an $R$-linear endomorphism $A:V\to V$;  {\it arrows} from $(V,A)$ to $(V',A')$ are $R$-linear morphisms $\ph:V\to V'$ such that $A'\ph=\ph A$. See also \cite{HdST22}. 
\item[ $\bb{End}_R^{{\circ}}$] denotes  the full subcategory of $\bb{End}_R$ whose objects are flat (and hence free) $R-$modules.  
\end{enumerate}

\section{regular-singular connections on a punctured formal disk}\label{sect.3}
\label{sect2}

\subsection{Definitions and properties} 
We review in this subsection the definitions and main properties of regular-singular connections on the relative formal punctured disk $\mm{Spec}\,\lau Rx$. Our reference is \cite{HdST22}. We notice that although in op. cit. the ring $R$ is assumed to be complete, many results hold in more generality.

\begin{dfn}
[Connections on the punctured formal disk]
\label{dfn.20220607--04}The {\it category of connections} on the punctured formal disk over $R$, or  on $\lau Rx$ over $R$,  or on $\lau Rx/R$,  denoted $\mc(\lau Rx/R)$, has for    
\begin{enumerate}\item[{\it objects}]   those couples $(M,\na)$ consisting of a finite $\lau Rx$-module $M$ and an $R$-linear endomorphism $\na:M\to M$, called  {\it the derivation}, satisfying Leibniz's rule $\na(fm)=\vt(f)m+f\na(m)$, and the  
\item[{\it arrows}] from $(M,\na)$ to  $(M',\na')$ are  $\lau Rx$-linear morphisms $\ph:M\to M'$ such that $\na'\ph=\ph\na$.  
\end{enumerate}
\end{dfn}

The $R$--flat connections on $\lau Rx/R$ enjoy the following remarkable property which is employed further ahead. 
 
\begin{prp}[{\cite[Theorem 8.18]{HdST22}}]
\label{rmk.20220607--01}
	 Let $(M,\na)$ be a connection  on $\lau Rx$ over $R$ such that $M$ is $R$--flat. Then, $M$  is a flat $\lau Rx$--module.
\end{prp}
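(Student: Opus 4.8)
I would prove this in two stages. Throughout, $\lau Rx$ is noetherian and $R$-flat (a localization of $R[[x]]$, which is $R$-flat since $R$ is noetherian), $M$ is finite over $\lau Rx$, and $K=\mathrm{Frac}(R)$.

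\emph{Stage 1: $M$ has no $\lau Rx$-torsion.} Let $T\subseteq M$ be the torsion submodule for the domain $\lau Rx$. By Leibniz's rule, $fm=0$ with $0\ne f$ gives $f^{2}\na(m)=-\vt(f)(fm)=0$, so $\na(m)\in T$: thus $T$ is a sub-connection, and likewise $\g a:=\mathrm{Ann}_{\lau Rx}(T)$ is a $\vt$-stable ideal. The crux is the \emph{Key Lemma}: a $\vt$-stable ideal $\g a\subsetneq\lau Rx$ with $\g a\ne(0)$ meets $R$ in a nonzero ideal. To prove it one reduces (the radical and minimal primes of a differential ideal are differential, as $C$ has characteristic $0$) to a nonzero proper $\vt$-stable prime $\g q$ and shows $\g q\cap R\ne(0)$. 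If $\g q\cap R=(0)$, then $R\hookrightarrow D:=\lau Rx/\g q$, a domain on which $\vt$ induces an $R$-linear derivation $\ov\vt$, with $\ov x:=x\bmod\g q$ a unit and $\ov\vt(\ov x)=\ov x$. Were $\ov x$ algebraic over $K$, the finite separable extension $K(\ov x)/K$ would admit no nonzero $K$-derivation, contradicting $\ov\vt(\ov x)=\ov x\ne0$; so $\ov x$ is transcendental over $K$, whence $R[x^{\pm}]\hookrightarrow D$ and therefore $\g q\cap R[x^{\pm}]=(0)$. Finally, a Weierstrass-preparation argument (carried out after completing $R$) rules this out: a minimal prime of $\g q\cap R[[x]]$ would be generated by a Weierstrass polynomial $\tilde p=x^{e}+\sum_{j<e}a_{j}x^{j}$ with $a_{j}\in\g r$, and $\vt$-stability forces $\tilde p\mid\vt(\tilde p)=e\tilde p+\sum_{j<e}(j-e)a_{j}x^{j}$; comparing $x$-degrees gives $a_{j}=0$ for all $j$, so $\tilde p=x^{e}$ — meaning $\g q$ contains $x$, impossible since $x$ is a unit — a contradiction. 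Granting the Key Lemma: if $T\ne0$, then $\g a\ne(0)$ (a finitely generated torsion module over a domain has nonzero annihilator), so $a^{N}T=0$ for some $0\ne a\in R$; but $M$ is $R$-flat, so the non-zerodivisor $a^{N}$ acts injectively on $M$, forcing $T=0$.

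\emph{Stage 2: flatness}, by induction on $\dim R$ (Stage 1 being unconditional). Since $\lau Rx$ is noetherian and $R$-flat and $M$ is finite over $\lau Rx$ and $R$-flat, the local criterion of flatness over the base $R$ reduces the claim to: $M\ot_R\ka(\g p)$ is flat over $\lau Rx\ot_R\ka(\g p)$ for every prime $\g p$ of $R$. For $\g p=\g r$ the fibre ring $(R/\g r)(\!(x)\!)$ is a field, so there is nothing to check. For $\g p\ne(0)$, set $R'=R/\g p$ (integral, noetherian, local, with $\dim R'<\dim R$): then $\lau Rx\ot_R\ka(\g p)=\lau{R'}x\ot_{R'}\ka(\g p)$, $M\ot_R\ka(\g p)=(M/\g p M)\ot_{R'}\ka(\g p)$, and $M/\g p M$ is finite over $\lau{R'}x$, $R'$-flat (base change of a flat module), and carries a connection over $\lau{R'}x/R'$; by the Proposition for $R'$ (smaller dimension) $M/\g p M$ is $\lau{R'}x$-flat, so $M\ot_R\ka(\g p)$ is flat over $\lau Rx\ot_R\ka(\g p)$. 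The case $\dim R=0$ ($R=C$, $\lau Rx$ a field) is trivial. Everything is thereby reduced to the single fibre $\g p=(0)$: show $M\ot_R K$ flat over $\lau Rx\ot_R K$, knowing (Stage 1) that it is torsion-free there.

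The generic fibre is where I expect the genuine obstacle. If $R$ is a discrete valuation ring — the case needed in the second half of Theorem~\ref{main_theorem} — it is immediate, for then $\lau Rx$, hence $\lau Rx\ot_R K$, is a Dedekind domain over which finitely generated torsion-free modules are flat; there Stage 1 alone proves the Proposition (and the Weierstrass step is not even needed, $\lau Rx$ having dimension $1$). In general $\lau Rx\ot_R K$ may have dimension $\ge2$, torsion-freeness no longer suffices, and one must descend further to a regular — or complete — base, typically via Popescu approximation, exactly as in the passage from the complete to the $G$-ring case in \cite[Theorem~10.1]{HdST22}. This is precisely where the extra hypotheses on $R$ enter, and the reason why \cite{HdST22}, the source of the statement, is set up over a complete base. (An alternative to the flatness step would be to read it off from an ``Euler form'' for $(M,\na)$; but producing such a form seems itself to use the present Proposition, so the fibrewise reduction is the more economical route.)
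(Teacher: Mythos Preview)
Your two-stage architecture is the right one and, for a discrete valuation ring, the argument is complete: $\lau Rx$ is then a Dedekind domain and Stage~1 alone suffices, as you note. For general $R$ there are two points to repair.

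First, the Weierstrass step in the Key Lemma does not go through. A height-one prime contained in $\g q_0:=\g q\cap\pos Rx$ need not be $\vt$-stable, so nothing forces ``$\tilde p\mid\vt(\tilde p)$''; and even with $R$ complete, irreducibles of $\pos Rx$ lying in $\g r\pos Rx$ (for instance $s+tx$ when $R=C[[s,t]]$) are not associate to any Weierstrass polynomial. The Key Lemma is nonetheless true, with a shorter proof that bypasses the transcendence and Weierstrass arguments entirely: choose $f=\sum_{n\ge0}a_nx^n\in\g q_0$ with $a_0\ne0$ (adjust by a power of the unit $x$); setting $P_N(T)=\prod_{n=1}^N(1-T/n)$ one has $P_N(\vt)f\in\g q_0$ and $P_N(\vt)f\equiv a_0\pmod{x^{N+1}}$, whence
\[
a_0\in\bigcap_N\bigl(\g q_0+x^{N+1}\pos Rx\bigr)=\g q_0
\]
by Krull's intersection theorem, since $(x)$ lies in the Jacobson radical of $\pos Rx$. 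Thus $\g q\cap R\ni a_0\ne0$.

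Second --- and this is the main point --- the generic fibre is \emph{not} an obstacle: your own Key Lemma dissolves it. Applied to the domain $R/\g p$, it shows that every nonzero $\vt$-stable ideal of $(R/\g p)(\!(x)\!)$ meets $(R/\g p)\smallsetminus\{0\}$ and therefore becomes the unit ideal in the localisation $\lau Rx\ot_R\ka(\g p)$; in other words, each fibre ring is $\vt$-simple. Now the Fitting ideals $\mathrm{Fitt}_i\!\bigl(M\ot_R\ka(\g p)\bigr)$ are $\vt$-stable (a routine computation from any presentation, using that the relation submodule of a differential module is itself a differential submodule of the free module), hence each equals $0$ or the whole ring, and so $M\ot_R\ka(\g p)$ is locally free. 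This disposes of \emph{all} fibres simultaneously --- in particular $\g p=(0)$ --- making the induction on $\dim R$ and any appeal to Popescu unnecessary. This fibre statement is exactly \cite[Theorem~8.19]{HdST22}, invoked later in the present paper in the proof of Theorem~\ref{03.08.2023--1}; combined with your fibre-by-fibre flatness criterion it yields the Proposition directly, which is the route taken in \cite{HdST22}.
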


\begin{dfn}
[Logarithmic connections] \label{logarithmic connection}
The category of {\it logarithmic connections},  denoted  $\mclog(\pos Rx/R)$, has for     
\begin{enumerate}\item[{\it objects}] those couples $(\cm,\na)$ consisting of a finite   $\pos Rx$-module and an $R$-linear endomorphism $\na:\cm\to\cm$, called  {\it the derivation},  satisfying Leibniz's rule $\na(fm)=\vt(f)m+f\na(m)$, and   
\item[{\it arrows}] from $(\cm,\na)$ to $(\cm',\na')$ are $\pos Rx$-linear morphisms $\ph:\cm\to \cm'$ such that $\na'\ph=\ph\na$.  
\end{enumerate}\end{dfn}

The two categories $\mc(\lau Rx/R)$ 
and $\mclog(\pos Rx/R)$ are abelian categories and there is an evident $R$-linear functor 
\[
\ga:\mclog (\pos Rx/R)\aro\mc(\lau Rx/R).
\]  

\begin{dfn}
[Regular-singular connections and models]
\label{dfn.20220607--05}  \ {}
\begin{enumerate}[(1)]
\item An object $M\in\mc(\lau Rx/R)$ is said to be {\it regular-singular} if it is isomorphic to a certain $\ga(\cm)$ for some $\cm\in \mclog(\pos Rx/R)$.
  The full subcategory of regular-singular connections will be denoted by $\mcrs(\lau Rx/R)$. 
\item Given $M$ in $\mcrs(\lau Rx/R)$, any object 
$\cm\in\mclog(\pos Rx/R)$ such that $\ga(\cm)\simeq M$ is called a {\it logarithmic model} of $M$. 

\item Let $\mcrs^\circ(\lau Rx/R)$, respectively $\mclog^\circ(\pos Rx/R)$, stand for the full subcategory of $\mcrs(\lau Rx/R)$, respectively $\mclog(\pos Rx/R)$, consisting of those objects $(M,\na)$ for which $M$ is a flat $\lau Rx$--module, respectively flat $\pos Rx$--module. 
\end{enumerate} 
\end{dfn}

\begin{rmk}Let $\cm\in\mclog(\pos Rx/R)$ be a model of $M$. Since $(0:x)_\cm\subset\cm$ is preserved by the derivation, it is clear that   $M$ possesses a model $\cm'$ such that $(0:x)_{\cm'}=0$. 
\end{rmk}

\begin{ex}[Euler connections]\label{ex.20220607--01}
Let $(V,A)\in\bb{End}_R$ be given. The logarithmic connection associated with the couple  $(V,A)$ is   defined by the couple $(\pos Rx\ot_RV,D_A)$, where 
$$D_A(f\ot v)= \vt(f)\ot v+f\ot Av.$$
This logarithmic connection is called the  {\it Euler connection associated with $(V,A)$}. Notation: $\mm{eul}_{\pos Rx}(V,A).$ 
\end{ex}

The Euler connections yield a functor, denoted  $\mm{eul}_{\pos Rx}$ or simply $\mm{eul}$ when no confusion may appear:  
\[
\mm{eul}:\bb{End}_R\aro\mclog(\pos Rx/R).
\]
It is straightforward to check that this is an $R$-linear, exact, and faithful tensor functor. Combining $\mm{eul}$ with $\gamma$ we have a functor
$$\ga\mm{eul}:\bb{End}_R\aro
\mcrs(\lau Rx/R).$$

The main aim of this section is to show that this functor produces an equivalence when restricted to objects with {\em exponents} lying in   $\tau\subset C$  (Theorem \ref{thm.2022070-8--01}). We first introduce the exponents.

Let $(\cm,\na)\in\mclog(\pos Rx/R)$. The Leibniz rule implies that $\na(x\cm)\subset x\cm$. Hence, we obtain an $R$-linear endomorphism
\begin{equation}\label{16.05.2020--2}
\mm{res}_\na: \cm/(x)\aro\cm/(x),
\end{equation}
given by 
\begin{equation}\label{16.05.2020--3}
\mm{res}_\na(m+(x) ) = \na(m)+(x).
\end{equation}
Further, taking residue modulo $\mathfrak r$ we have the map 
\begin{equation}
\label{14.07.2022--1}
\ov{\mm{res}}_\na : \cm/(\g r,x)\aro\cm/(\g r,x).
\end{equation}
\begin{dfn}[Residue and exponents]\label{28.06.2021--6}
Let $(\cm,\nabla)\in\mclog(\pos Rx/R)$.
\begin{enumerate}[(1)]
\item 
The $R$-linear map \eqref{16.05.2020--2} is called the {\it residue} of $\na$.  
\item The  eigenvalues of  $\ov{\mm{res}}_\na$ are called the (reduced) {\it  exponents} of $\na$. The set of exponents will be denoted by $\mm{Exp}(\cm,\na)$, $\mm{Exp}(\na)$ or $\mm{Exp}(\cm)$ if no confusion may appear. 
\end{enumerate}
\end{dfn}

The following result was obtained in \cite{HdST22} for $R$ being a complete local $C$-algebra, but the proof works indeed for any local $C$-algebra.

\begin{thm}[{\cite[Theorem 8.10]{HdST22}}]\label{thm.20220607--03} Let $(\cm,\na)\in\mclog(\pos Rx/R)$ be such that $\cm$ is a free $\pos Rx$-module.    
If $\mathrm{Exp}(\cm)\subset\tau$, then  $(\cm,\na)$ is isomorphic to $\mm{eul}_{\pos Rx}(\cm/(x),\mm{res}_\na)$. \qed 
\end{thm}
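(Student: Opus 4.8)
The plan is to produce a $\pos Rx$-linear isomorphism
\[
\Phi\colon\ \pos Rx\ot_R V\ \aro\ \cm,\qquad V:=\cm/(x),\quad A:=\mm{res}_\na,
\]
intertwining the Euler derivation $D_A$ with $\na$; since $\cm$ is finite free over $\pos Rx$ the module $V$ is finite free over $R$, and the source above is by definition $\mm{eul}_{\pos Rx}(V,A)$, so such a $\Phi$ is exactly what we want. First I would pick any $R$-linear section $\si\colon V\to\cm$ of the projection $\cm\to V$ and use it to trivialize $\cm\cong\pos Rx\ot_R V$ as $\pos Rx$-modules; this trivialization is the identity modulo $x$. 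Transporting $\na$, and using that the difference of two $\vt$-connections on a $\pos Rx$-module is $\pos Rx$-linear together with the fact that $\na$ reduces to $\mm{res}_\na=A$ modulo $x$, one writes the transported derivation as $D_A+\Theta$ with $\Theta=\sum_{n\ge1}x^n\Theta_n$, $\Theta_n\in\mm{End}_R(V)$.

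Next I would look for a $\pos Rx$-linear automorphism $g=\id+\sum_{n\ge1}x^ng_n$, $g_n\in\mm{End}_R(V)$, conjugating $D_A+\Theta$ back to $D_A$; this is equivalent to $[D_A,g]+\Theta g=0$. A short computation gives $[D_A,g]=\sum_{n\ge1}x^n(ng_n+\mm{ad}_A g_n)$, so the equation unwinds, degree by degree in $x$, into the recursion
\[
(n\cdot\id+\mm{ad}_A)(g_n)\ =\ -\sum_{k=0}^{n-1}\Theta_{n-k}\,g_k\qquad(n\ge1),
\]
whose right-hand side involves only $g_0=\id,\dots,g_{n-1}$. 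Thus everything comes down to inverting $n\cdot\id+\mm{ad}_A$ on $\mm{End}_R(V)$ for each $n\ge1$; granting this, the $g_n$ are uniquely determined, $g$ defines a genuine $\pos Rx$-linear operator (a power series with coefficients in the finite free module $\mm{End}_R(V)$), and $g$ is invertible because it is $\equiv\id\bmod x$ and $R$ is local, so its determinant lies in $1+x\pos Rx\subset\pos Rx^\times$. Composing $g$ with the trivialization yields $\Phi$.

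The hard part, and the only place the hypothesis is used, is the invertibility of $n\cdot\id+\mm{ad}_A$ on the finite free $R$-module $\mm{End}_R(V)$. Here I would note that, $R$ being local with residue field $C$, an $R$-linear endomorphism of a finite free $R$-module is invertible as soon as its reduction modulo $\g r$ is (its determinant is a unit iff it is a unit mod $\g r$). Modulo $\g r$ the operator becomes $n\cdot\id+\mm{ad}_{\ov A}$ on $\mm{End}_C(V\ot_R C)$, whose eigenvalues are among the differences $\la_i-\la_j$ of eigenvalues of $\ov A$ --- that is, of elements of $\mm{Exp}(\cm)\subset\tau$ --- using that $C$ is algebraically closed. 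If such a difference equalled $-n$ with $n\ge1$, the two exponents $\la_i,\la_j$ would lie in one coset of $\ZZ$ in $C$, hence coincide because $\tau\to C/\ZZ$ is injective, forcing $n=0$ --- a contradiction. So $-n$ is never an eigenvalue of $\mm{ad}_{\ov A}$, whence $n\cdot\id+\mm{ad}_A$ is invertible; this is the crux, and the remainder is the routine ``solve order by order in $x$'' bookkeeping.
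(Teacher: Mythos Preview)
Your argument is correct and is exactly the classical ``formal gauge transformation'' method: trivialize $\cm$ via a section of $\cm\to\cm/(x)$, write the transported connection as $D_A+\Theta$ with $\Theta\equiv0\bmod x$, and solve $[D_A,g]+\Theta g=0$ for $g\equiv\id\bmod x$ degree by degree, the key point being that $n\cdot\id+\mm{ad}_A$ is invertible on $\mm{End}_R(V)$ for $n\ge1$ because its reduction mod $\g r$ has spectrum $\{n+\la_i-\la_j\}$ with $\la_i,\la_j\in\tau$, and $\tau\hookrightarrow C/\ZZ$ forces these to miss $0$.

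Note that the paper does not supply its own proof here: the statement is quoted from \cite[Theorem~8.10]{HdST22} with only the remark that the argument given there (for complete $R$) goes through for any local $C$--algebra $R$. Your write-up is precisely that argument, and indeed nowhere uses completeness of $R$ --- only that $R$ is local (to reduce invertibility of $n\cdot\id+\mm{ad}_A$ to the residue field) and that coefficients live in $\pos Rx$ so that the formal series $g=\sum_{n\ge0}x^ng_n$ makes sense. The spectral fact you invoke about $\mm{ad}_{\ov A}$ is the same one the paper cites later as \cite[II, Problem~4.1]{Was76}. One cosmetic point: the invertibility of $g$ does not need $R$ local; $1+x\pos Rx\subset\pos Rx^\times$ holds for any $R$ by the geometric series.
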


\subsection{Euler form for connections of $\mcrs^\circ(\lau Rx/R)$}\label{20.09.2023--1}
{\it  We now suppose, until the end of Section \ref{20.09.2023--1}, that  $R$ is in addition  Henselian.} 
With the preparation in the previous subsection, we show now that any regular-singular connection $(M,\nabla)$, where $M$ is a {\it flat}  $R$--module, is isomorphic to an  Euler connection. This is an extension of \cite[Corollary 9.4]{HdST22} to the case where $R$ is solely Henselian. The idea behind the proof is to show that logaritmic models with exponents in $\tau$ exist in all generality (cf. Proposition \ref{thm.20220607--04}) and that these models, when $R$ is complete, are sufficient to characterize the Deligne-Manin models appearing in the central result \cite[Theorem 9.1]{HdST22} (this is the content of Theorem \ref{03.08.2023--1}). Then, basic Commutative 
Algebra (cf. Lemma \ref{faith--flatness}) allows us to find  {\it free} logarithmic models for $(M,\na)$
by using free logarithmic models of $\lau {\wh R}x\ot M$.


In order to present a clear argument, we require the following notations and terminology from \cite{HdST22}. Given $k
\in\NN$ and an object $(\cm,\na)$ of $\mclog(\pos Rx/R)$, or of $\mcrs(\pos Rx/R)$, we let $(\cm,\na)|_k$, or $\cm|_k$ if no confusion is likely, stand for the object of $\mclog(\pos Cx/C)$, respectively $\mcrs(\lau Cx/C)$, obtained from the induced map $\na:\cm/\g r^{k+1}\aro\cm/\g r^{k+1}$. 

We begin by showing that the Deligne-Manin model constructed in \cite[Theorem 9.1]{HdST22} can be singled-out by a much simpler condition.

\begin{thm}\label{03.08.2023--1}We assume that $R$ is complete for the moment. Let 
$(M,\na)\in\mcrs(\lau Rx/R)$ posses a  logarithmic model $\ce\in\mclog(\pos Rx/R)$ enjoying the following properties: 
\begin{enumerate}[(1)]\item All its exponents are on $\tau$.
\item We have  $(0:x)_\ce=0$. 
\end{enumerate}

 Then $\ce$ is isomorphic to the Deligne-Manin logarithmic model described in \cite[Theorem 9.1]{HdST22}. In particular, 

\begin{enumerate}[(a)]
\item the $\pos Cx$--module $\ce|_k$ is free for any given $k\in\NN$.
\item If, in addition $M$ is $R$-flat, then $\ce$ is a \textbf{free} $\pos Rx$-module.  
\end{enumerate}
\end{thm}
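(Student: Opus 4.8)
The plan is to compare the given model $\ce$ with the Deligne--Manin logarithmic model $\cd$ produced in \cite[Theorem 9.1]{HdST22}, showing that the canonical identification $\ga(\ce)\simeq M\simeq\ga(\cd)$ in $\mcrs(\lau Rx/R)$ is induced by an isomorphism $\ce\simeq\cd$ of logarithmic connections. First I would recall the two properties characterizing $\cd$: its exponents lie in $\tau$ and $(0:x)_{\cd}=0$, which are precisely the two hypotheses placed on $\ce$; so the claim is really a \emph{uniqueness} statement for logarithmic models subject to these two constraints. The natural strategy is to produce morphisms $\ce\to\cd$ and $\cd\to\ce$ in $\mclog(\pos Rx/R)$ lifting the identity on $M$, and to check they are mutually inverse. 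To obtain such a lift, I would observe that a $\lau Rx$-linear horizontal map $\ga(\ce)\to\ga(\cd)$ restricts to a $\pos Rx$-linear horizontal map on the lattices as soon as it carries $\ce$ into $\cd$; since both lattices are finitely generated, this is a question of comparing the $x$-adic filtrations, and the exponent hypothesis (via Theorem \ref{thm.20220607--03} after passing to a free situation, or via the eigenvalue-gap argument already used in \cite{HdST22}) forces the comparison in both directions: no exponent of $\ce$ and $\cd$ differs by a nonzero integer, so a horizontal section defined over $\lau Rx$ cannot acquire or lose poles. This is the step I expect to carry the real weight, so I would isolate it as the main obstacle: controlling the interaction between horizontality and the $x$-adic order when the coefficient ring $R$ is merely local (not a field, not complete here but complete is assumed in this theorem) and $M$ is not assumed flat.

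Once $\ce\simeq\cd$ is established, assertions (a) and (b) follow by transport of structure from the known properties of the Deligne--Manin model. For (a), I would invoke the construction of $\cd$ in \cite[Theorem 9.1]{HdST22}: by design each truncation $\cd|_k$ is a free $\pos Cx$-module (this is part of the output of that theorem, obtained there by the limit argument over the finite $C$-algebras $R_k$), hence so is $\ce|_k$. For (b), assuming in addition that $M$ is $R$-flat, I would again quote \cite[Theorem 9.1]{HdST22}, where flatness of $M$ over $R$ is shown to imply that the Deligne--Manin lattice is a free $\pos Rx$-module; the isomorphism $\ce\simeq\cd$ then transfers freeness to $\ce$.

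Alternatively — and this is the route I would actually write down if the uniqueness argument above proves delicate — one can bypass a direct comparison and argue as follows. Using the remark preceding the theorem we may assume $(0:x)_{\cd}=0$ for the Deligne--Manin model too, so both $\ce$ and $\cd$ have trivial $x$-torsion. The two residues $\mm{res}_\na$ on $\ce/(x)$ and on $\cd/(x)$ have reduced spectra inside $\tau$; applying the rigidity of Euler forms (Theorem \ref{thm.20220607--03}) after base-changing to $R_k$ and using that $\ce|_k$, $\cd|_k$ become \emph{free} over $\pos Cx$ once we know (a), we get compatible isomorphisms $\ce|_k\simeq\cd|_k$; passing to the limit over $k$ (here completeness of $R$ is essential, as emphasized in the Introduction) yields $\ce\simeq\cd$. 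The delicate point common to both approaches is the same: establishing that the truncations are free over $\pos Cx$, which is where the input from \cite[Theorem 9.1]{HdST22} is genuinely used and which cannot be reproved cheaply.
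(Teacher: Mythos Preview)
Your plan contains a genuine gap, and you almost name it yourself in the final sentence: freeness of the truncations $\ce|_k$ over $\pos Cx$ is conclusion (a), not a hypothesis, yet both of your routes use it as input. The Deligne--Manin model $\cd$ from \cite[Theorem 9.1]{HdST22} does come with each $\cd|_k$ free --- that is its defining feature --- but nothing tells you this about $\ce$ until \emph{after} the isomorphism $\ce\simeq\cd$ has been produced. In your alternative route the circularity is explicit (``once we know (a)''). In your first route it is hidden in the symmetry you assume: the eigenvalue-gap argument, or an appeal to Theorem \ref{thm.20220607--03}, in the direction $\cd\to\ce$ requires the target $\ce$ (or its truncations) to be free, and a priori it is not.

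The paper's proof is deliberately asymmetric for exactly this reason. Only one arrow $\ph:\ce\to\cd$ is constructed, by building $\ph_k:\ce|_k\to\cd|_k$ for each $k$ via \cite[Proposition 4.4(3)]{HdST22} --- which needs freeness only of the \emph{target} $\cd|_k$ --- and then passing to the inverse limit using $\g r$-adic completeness. Surjectivity comes from the case $k=0$: over the field $C$ the $x$-torsion-free quotient of $\ce|_0$ is automatically free over the PID $\pos Cx$, so \cite[Proposition 4.4(3)]{HdST22} applies once more to give an isomorphism onto $\cd|_0$, and completeness propagates surjectivity to $\ph$. Injectivity is the step you did not anticipate: one cannot simply conclude that $\ph[x^{-1}]:M\to M$ is the identity, because $M$ is not known to be $\g r$-adically separated; instead the paper shows that the kernel and cokernel of $\ph[x^{-1}]$, being objects of $\mc(\lau Rx/R)$ which vanish on a neighbourhood of the closed fibre, must vanish everywhere, via fibrewise flatness and \cite[Lemma 9.2]{HdST22}. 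Once $\ph$ is an isomorphism, (a) and (b) are immediate from the known properties of $\cd$.
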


\begin{proof}Let $\cm\in\mclog(\pos Rx/R)$ be a model of $M$ as in \cite[Theorem 9.1]{HdST22}. 
Fix $k\in\NN$; we know that $\cm|_k$ is a free   $\pos Cx$-module. Hence, we obtain an arrow $\ph_k:\ce|_k\to\cm|_k$ of $\mclog(\pos Cx/C)$ which fits into 
\begin{equation}\label{03.08.2023--2}\xymatrix{
\ce|_k\ar^-{\ph_k}[rr]\ar[d]_{\text{natural}} && \cm|_k\ar@{^{(}->}[d]^{\text{natural}}
\\M|_k\ar@{=}[rr]&&M|_k
}
\end{equation}
because of \cite[Proposition 4.4(3)]{HdST22}. Note that, $\ph_k$ is the unique arrow rendering diagram \eqref{03.08.2023--2} commutative. 
By this reason and the fact that $\ce$ and $\cm$ are $\g r$-adically complete (see Exercise 8.2 and Theorem 8.7 in  \cite{Mat86}), we obtain an arrow of $\mclog(\pos Rx/R)$ 
\[
\ph:\ce\aro\cm
\]
  enjoying the following properties. 
\begin{enumerate}[(i)]\item For each $k\in\NN$, we have $\ph|_k=\ph_k$. 
\item The following diagram commutes:
\[
\xymatrix{
\ce\ar[rr]^\ph\ar[d]_{\text{natural}}&&\cm\ar[d]^{\text{natural}}
\\
\lip_kM|_k\ar@{=}[rr]&&\lip_kM|_k  .
}
\] 
 \end{enumerate}

Let us now observe that $\ph$ is {\it surjective}. Indeed, $\ce|_0\in\mclog(\pos Cx/C)$ is a model of $M|_0\in\mcrs(\lau Cx/C)$ having exponents in $\tau$ so that, 
\[
\fr{\ce|_0}{(0:x)_{\ce|_0}}
\]
is, being a {\it quotient} of $\ce|_0$, a model of $M|_0$ with exponents in $\tau$. 
 Therefore, the $\pos Cx$--linear mapping 
 \[
 \fr{\ce|_0}{(0:x)_{\ce|_0}}\aro \cm|_0 
 \]
which is induced by $\ph_0$
is an isomorphism \cite[Proposition 4.4(3)]{HdST22}; consequently, $\ph|_0$ is surjective. Because $\pos Rx$,  $\ce$ and $\cm$ are   $\g r$-adically complete, and  because of \cite[$0_{\mathrm I}$, 7.1.14]{ega}, we conclude that $\ph$ is also surjective. 

We need to show that $\ph$ is also {\it injective}. It is tempting to argue with the completion $\lip_kM|_k$, but this is a complicated object and we proceed as at  the end of the proof of \cite[Theorem 9.1]{HdST22}: {\it We   show that \[\ph[x^{-1}]:\ce[x^{-1}]\aro\cm[x^{-1}]\]
is an isomorphism}. Once this is guaranteed, injectivity of $\ph$ is a consequence of the fact that $\ce\to\ce[x^{-1}]$ is injective (by construction) and that 
\[
\xymatrix{\ce\ar@{^{(}->}[d]\ar[rr]^\ph&&\cm\ar[d]
\\
\ce[x^{-1}]\ar[rr]^\sim_{\ph[x^{-1}]}&& \cm[x^{-1}]
}
\]
commutes. 

That $\ph[x^{-1}]$ is an isomorphism is verified by
the ensuing arguments. We start by
 observing that $\ph[x^{-1}]|_k$ is an isomorphism for all $k$, so that $\ph[x^{-1}]$ is an isomorphism in a {\it neighborhood of the closed fiber of $\spc\,\lau Rx\to\spc \,R$}. This implies that $N:=\mm{Ker}\,\ph[x^{-1}]$ and $Q:=\mm{Coker}\,\ph[x^{-1}]$ vanish on an open neighborhood of the aforementioned closed fiber. Now, $N$ and $Q$ are objects of $\mc(\lau Rx/R)$, so that for each $\g p\in\spc\,R$, the fibers $N\ot_R\bm k(\g p)$
and $Q\ot_R\bm k(\g p)$ are flat as $\lau Rx\ot_R\bm k(\g p)$-modules \cite[Theorem 8.19]{HdST22}. A simple argument in Commutative Algebra \cite[Lemma 9.2]{HdST22} now shows that $N=Q=0$, thus assuring that $\ph[x^{-1}]$ is an isomorphism. 

To end, if $M$ is $R$-flat, Corollary {9.4} of \cite{HdST22} is enough to conclude the prof of item (b). 
\end{proof}

In view of Theorem \ref{03.08.2023--1}, it becomes important, even when $R$ is solely Henselian, to construct logarithmic models with exponents in $\tau$. The construction follows the classical method of using ``Jordan subspaces'' (generalized eigenspaces) to adjust the exponents \cite[Section 17.4]{Was76} but, in the present case it is necessary to have such a decomposition for $R$-linear endomorphisms. This is a consequence of the following lemma which was mentioned in Remarks 8.15(a) of \cite{HdST22}.

\begin{lem}[``Jordan decomposition over $R$'']\label{lem.202307--01}
		Let $(V,\ph)\in \bb{End}_R$ 
and denote by $\ov\ph:\ov V\to\ov V$ the reduction of $\ph:V\to V$ modulo $\g r$. Let 
 $\{\vr_1,\ldots,\vr_r\}$ be the spectrum of $\ov{\ph}$ and write  
\[
\ov V=\bigoplus\limits_{i=1}^r \mm{Ker}\big(\ov \ph-\vr_i\big)^{\mu_i}
\]
Then,  there exists a direct sum
$$V=V_1\oplus\ldots\oplus V_r,$$ 
where $V_i$ is $\ph$--invariant $R$--submodule of $V,$ such that its reduction modulo $\g r$ is $\mathrm{Ker}\, \big(\ov \ph-\vr_i\big)^{\mu_i},$ for each $1 \leq i \leq r$.
\end{lem}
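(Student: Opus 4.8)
The goal is a Jordan-type decomposition of $(V,\ph)\in\bb{End}_R$ lifting the generalized-eigenspace decomposition of $\ov\ph$. The plan is to reduce to the case of a single eigenvalue by induction on $r$, and to realize the splitting via an idempotent. First I would observe that, after translating by a scalar (which does not affect invariance), we may assume $\vr_1=0$, so that $\ov\ph$ is nilpotent on $\mm{Ker}\,\ov\ph^{\mu_1}$ and invertible on the complementary summand; thus $\ov\ph^{\mu_1}$ has image exactly $\bigoplus_{i\ge 2}\mm{Ker}(\ov\ph-\vr_i)^{\mu_i}$ and kernel $\mm{Ker}\,\ov\ph^{\mu_1}$, and these two subspaces are complementary in $\ov V$. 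In other words, writing $\ov\ps:=\ov\ph^{\,N}$ for $N$ large (say $N=\mu_1$), the endomorphism $\ov\ps$ restricted to each summand is either $0$ or invertible, and $\ov V=\ker\ov\ps\oplus\mm{im}\,\ov\ps$.

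The key step is to lift this to $V$. Consider $\ps:=\ph^{\,N}\in\mm{End}_R(V)$. I claim $V=\ker\ps\oplus\mm{im}\,\ps$ after possibly enlarging $N$, and that both summands are $\ph$-invariant with the prescribed reductions. Since $V$ is a finite module over the Henselian (in particular, we only need local) ring $R$ and $\ps$ commutes with itself, the sequence $\mm{im}\,\ps\supseteq\mm{im}\,\ps^2\supseteq\cdots$ and $\ker\ps\subseteq\ker\ps^2\subseteq\cdots$ stabilize (by the Artin--Rees / Krull intersection theorem, or by Noetherianity and a Nakayama-type argument applied after noting the reductions stabilize), so replacing $N$ by a multiple we may assume $\mm{im}\,\ps=\mm{im}\,\ps^2$ and $\ker\ps=\ker\ps^2$. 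Then the standard Fitting lemma argument gives $V=\ker\ps\oplus\mm{im}\,\ps$: indeed for $v\in V$ write $\ps v=\ps^2 w$, so $v-\ps w\in\ker\ps$ and $\ps w\in\mm{im}\,\ps$, while if $v\in\ker\ps\cap\mm{im}\,\ps$, say $v=\ps w$ with $\ps v=0$, then $\ps^2 w=0$ so $w\in\ker\ps^2=\ker\ps$, whence $v=\ps w=0$. Both $\ker\ps$ and $\mm{im}\,\ps$ are manifestly $\ph$-invariant because $\ph$ commutes with $\ps=\ph^N$. Set $V_1:=\ker\ps$ and $V':=\mm{im}\,\ps$; reducing mod $\g r$ is right-exact and, because the decomposition $V=V_1\oplus V'$ splits, it is preserved, giving $\ov V=\ov{V_1}\oplus\ov{V'}$ with $\ov{V_1}\twoheadrightarrow\ker\ov\ps$ and $\ov{V'}\twoheadrightarrow\mm{im}\,\ov\ps$; comparing dimensions (the two right-hand spaces are already complementary in $\ov V$) forces $\ov{V_1}=\ker\ov\ps=\mm{Ker}(\ov\ph)^{\mu_1}$ and $\ov{V'}=\bigoplus_{i\ge2}\mm{Ker}(\ov\ph-\vr_i)^{\mu_i}$. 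Here I must be a little careful: the natural surjections $\ov{V_1}\to\ker\ov\ps$ etc.\ are the canonical maps $V_1/\g r V_1\to \ker(\ov\ps\colon\ov V\to\ov V)$, which need not be injective a priori, but the dimension count over the field $C$ closes the gap since $\dim_C\ov{V_1}+\dim_C\ov{V'}=\dim_C\ov V=\dim_C\ker\ov\ps+\dim_C\mm{im}\,\ov\ps$ and each map is surjective.

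Now I would apply the induction hypothesis to $(V',\ph|_{V'})$: its reduction $\ov{V'}$ has spectrum $\{\vr_2,\ldots,\vr_r\}$ with the generalized eigenspace decomposition inherited from $\ov V$, so by induction $V'=V_2\oplus\cdots\oplus V_r$ with $V_i$ being $\ph$-invariant and reducing to $\mm{Ker}(\ov\ph-\vr_i)^{\mu_i}$. Combining, $V=V_1\oplus V_2\oplus\cdots\oplus V_r$ is the desired decomposition. The base case $r=1$ is vacuous (take $V_1=V$). I expect the main obstacle to be the stabilization argument for $\ker\ph^N$ and $\mm{im}\,\ph^N$ over a general Noetherian local ring: one needs to know these chains become stationary, which follows from Noetherianity of $V$ for the increasing chain $\ker\ph^N$, and then the decreasing chain $\mm{im}\,\ph^N$ stabilizes once $\ker$ does because on $\mm{im}\,\ph^N$ the map $\ph^N$ becomes injective for $N$ past the stabilization point; a small amount of care is needed to see that a single $N$ works for both, but replacing $N$ by, say, the length of a stabilizing segment, or simply taking $N$ large enough that both have stabilized, suffices. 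Everything else is the classical Fitting-lemma manipulation plus a dimension count over $C$, and nowhere do we need $R$ Henselian — only local and Noetherian — which is consistent with the remark in \cite{HdST22} that this is an elementary fact.
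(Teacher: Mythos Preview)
Your argument has a genuine gap at the Fitting-lemma step. You claim that once the increasing chain $\ker\ph^N$ stabilizes, the decreasing chain $\mm{im}\,\ph^N$ stabilizes as well, ``because on $\mm{im}\,\ph^N$ the map $\ph^N$ becomes injective.'' But an injective endomorphism of a finitely generated module over a Noetherian ring need \emph{not} be surjective: multiplication by a uniformizer $t$ on a DVR $R$ has $\ker t^N=0$ for all $N$, yet $\mm{im}\,t^N=t^NR$ strictly decreases forever. So the chain $\mm{im}\,\ps^N$ need not stabilize, and the Fitting splitting $V=\ker\ps\oplus\mm{im}\,\ps$ can simply fail. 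Your final remark that ``nowhere do we need $R$ Henselian'' is therefore a red flag, not a feature.

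This is not a repairable technicality: the Henselian hypothesis is essential, and the paper provides a counterexample immediately after the lemma. Take $R=C[t]_{(t)}$ (local, Noetherian, not Henselian) and $\ph=\begin{pmatrix}0&1+t\\1&0\end{pmatrix}$ on $V=R^2$. Then $\ov\ph$ has distinct eigenvalues $\pm1$, so a decomposition as in the lemma would force $V$ to split as a sum of two rank-one $\ph$-invariant submodules; but the characteristic polynomial $T^2-(1+t)$ has no root in $R$, so no such splitting exists. In your scheme, translating by $\vr_1=1$ gives $\ph'=\ph-1$ with $\det(\ph')=-t$; hence $\ker(\ph')^N=0$ for all $N$ while $\mm{im}(\ph')^N$ strictly decreases. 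Your construction outputs $V_1=0$, which cannot reduce to the one-dimensional space $\mm{Ker}(\ov\ph-1)$, and the ``dimension count'' you invoke never gets off the ground because $V=V_1\oplus V'$ was never established.

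The paper's proof uses the Henselian property directly: lift $\ph$ to $\tilde\ph$ on a free module, observe that the characteristic polynomial of $\tilde\ph$ reduces to $\prod_i(T-\vr_i)^{\mu_i}$, and apply Hensel's lemma to lift this to a factorization $P_{\tilde\ph}=\prod_ig_i$ into pairwise \emph{strictly coprime} factors in $R[T]$. Then $V_i:=\mm{Ker}\,g_i(\ph)$ gives the decomposition, by Cayley--Hamilton together with the B\'ezout identity $R[T]g_i+R[T]\hat g_i=(1)$. The Henselian hypothesis is precisely what produces the coprime lift.
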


\begin{proof}
Let $R^n\to V$ be a surjection inducing an isomorphism   $C^n\to V/\mathfrak{r}$. Then $\varphi$ 
lifts to $\tilde\varphi:R^n\to R^n$ and the residue of the characteristic
polynomial of $\tilde{\varphi}$  equals the characteristic polynomial  of $\ov{\varphi}$:
$$\ov{P_{\tilde{\varphi}}(T)}=P_{\ov\varphi}(T).$$
As $R$ is  Hensenlian, the factorization
$$P_{\ov\varphi}(T)= \prod\limits_{i=1}^r\big(T-\vr_i\big)^{\mu_i}$$
lifts to a factorization
\[
P_{\tilde{\varphi}}(T)= \prod\limits_{i=1}^{r}g_i(T),
\]
where, for any $1\le i\le r$, the polynomials $g_i$ and $\hat g_i:=\prod_{j\not=i}g_j$ are {\it strictly coprime}, i.e. 
\[
\tag{$\star$} R[T]\po g_i+R[T]\po\hat g_i=(1),
\]
cf. \cite[Ch. I, Section 4, p.32]{milne}.  
Let $V_i=\mathrm{Ker}\, g_i(\ph)$; then $V_i$ are $\ph$--invariant 
$R$--submodules of $V$. From $(\star)$
and the fact that $P_{\tilde \ph}(\ph)$ vanishes identically on $V$, it is easy to see that 
\[
\begin{split}V&=\mm{Ker}\,P_{\tilde\ph}(\ph)
\\&=V_1\op\cdots\op V_r.
\end{split}
\]
Now, the composition $V_i\to V\to \ov V$ sends $V_i$ to $\mm{Ker}(\ov\ph-\vr_i)^{\mu_i}$ and has kernel $\g rV\cap V_i=\g rV_i$. It is easily verified that $V_i\to \mm{Ker}(\ov\ph-\vr_i)^{\mu_i}$ must  be surjective as well, so that the last claim is   verified.  
\end{proof}

\begin{ex}If we drop the assumption that $R$ be Henselian, the above result certainly fails. Suppose for example that $R=\{a/b\,:\,a,b\in C[t],\,b(0)\not=0\}$. Define $\ph=\begin{pmatrix}0&1+t\\1&0\end{pmatrix}:R^2\to R^2$. Then $\ov\ph:C^2\to C^2$ acts by multiplication by $1$ on $C(\vec e_1+\vec e_2)$ and by multiplication by $-1$ on $C(\vec e_1-\vec e_1)$. On the other hand, it is not possible to decompose $R^2$ into a direct sum of submodules of rank one.  
\end{ex}
	
We are now ready to show the existence of   logarithmic models having exponents on $\tau$.
\begin{prp}[Shearing] \label{thm.20220607--04} Let $(M,\na)$ be the regular--singular connection on $\lau Rx/R$. Then there exists a logarithmic model 
$\cm\in \mclog(\pos Rx/R)$ of $(M,\na)$ such that $\mm{Exp}(\cm)\subset\tau$ and $(0:x)_\cm=0$.
	\end{prp}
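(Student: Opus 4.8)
The plan is to start from an arbitrary logarithmic model and modify it at the origin so as to shift each exponent into $\tau$, one $\ZZ$-coset at a time, using the Jordan decomposition over $R$ provided by Lemma \ref{lem.202307--01}. First I would pick any logarithmic model $\ce\in\mclog(\pos Rx/R)$ of $(M,\na)$; replacing $\ce$ by $\ce/(0:x)_\ce$ (which, as noted in the Remark after Definition \ref{dfn.20220607--05}, is again a logarithmic model of $M$) we may assume $(0:x)_\ce=0$ from the outset, and this condition must be preserved throughout the construction. Next, apply Lemma \ref{lem.202307--01} to the couple $(\ce/(x),\mm{res}_\na)\in\bb{End}_R$: writing $\{\vr_1,\dots,\vr_r\}$ for the spectrum of $\ov{\mm{res}}_\na$, we obtain a decomposition $\ce/(x)=W_1\oplus\cdots\oplus W_r$ into $\mm{res}_\na$-invariant $R$-submodules whose reductions mod $\g r$ are the generalized eigenspaces $\mm{Ker}(\ov{\mm{res}}_\na-\vr_i)^{\mu_i}$. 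Choose the index set so that $\vr_1,\dots,\vr_s$ are already in $\tau$ (for these we do nothing) while $\vr_{s+1},\dots,\vr_r$ are not; for each such $i$ there is a unique integer $n_i$ with $\vr_i+n_i\in\tau$.

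The core step is the shearing modification itself. For a single bad coset, say associated to $\vr_i$ with $W_i\subset\ce/(x)$, I would lift $W_i$ to a direct summand decomposition $\ce=\widetilde W_i\oplus(\text{complement})$ at the level of $\pos Rx$-modules — this uses that $\ce/(x)$ decomposes and that $\pos Rx$ is $(x)$-adically complete, so idempotents lift — and then replace $\ce$ by the submodule $\ce'\subset M=\ce[x^{-1}]$ generated by $x^{\mp n_i}\widetilde W_i$ together with the other summands, the sign chosen so that the new residue on this piece becomes $\mm{res}_\na-n_i$ (hence reduces to $\ov{\mm{res}}_\na-n_i\cdot\id$, with eigenvalue $\vr_i-n_i\in\tau$; one adjusts the sign convention to land in $\tau$). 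One checks that $\na$ still satisfies Leibniz's rule on $\ce'$ because $\vt(x^{n})=nx^{n}$, that $\ce'$ is again a finite $\pos Rx$-module with $\ce'[x^{-1}]=M$, hence a logarithmic model of $(M,\na)$, and that the operation does not touch the good summands $W_1,\dots,W_s$ nor the other bad cosets (their residues only change by the scalar shift on the relevant block). Iterating over $i=s+1,\dots,r$ produces a model $\cm$ with $\mm{Exp}(\cm)\subset\tau$. Finally, pass to $\cm/(0:x)_\cm$, which is still a logarithmic model of $M$ and still has exponents in $\tau$ (exponents of a quotient are among those of the original, by the argument already used in the proof of Theorem \ref{03.08.2023--1}), to restore $(0:x)_\cm=0$.

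The main obstacle I expect is the bookkeeping in the shearing step: one must verify that multiplying a single lifted summand by a power of $x$ does produce a genuine $\pos Rx$-submodule of $M$ that is stable under $\na$ and whose residue is the predicted scalar-shifted endomorphism, \emph{and} that the remaining summands' residues are genuinely unchanged — this is where the $R$-linearity of the Jordan decomposition (as opposed to the naive $C$-linear one) is essential, since over a merely Henselian $R$ the generalized eigenspaces need not split further. A secondary technical point is ensuring that after all the shears the module is still finitely generated over $\pos Rx$ and that $(0:x)$ can be killed at the end without reintroducing bad exponents; both are handled by the quotient remark and the standard fact that exponents of a logarithmic quotient form a subset of the original set of exponents.
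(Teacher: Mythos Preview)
Your overall plan---apply the Jordan decomposition over $R$ from Lemma \ref{lem.202307--01} to the residue, then shear by a power of $x$ to move each bad exponent into $\tau$---is exactly the paper's strategy. The gap is in the implementation of the shear, and it is more than bookkeeping.

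Lifting the decomposition $\ce/(x)=\bigoplus W_i$ to a direct-sum decomposition $\ce=\bigoplus\widetilde W_i$ of $\pos Rx$-modules via idempotent lifting gives you summands that are \emph{not} $\na$-stable: idempotent lifting respects the $\pos Rx$-module structure, but $\na$ is not $\pos Rx$-linear. All you know is $\na(\widetilde W_j)\subset\widetilde W_j+x\ce$, because $\mm{res}_\na$ preserves $W_j$. Consequently the sheared lattice $\ce'=x^{k}\widetilde W_i\oplus\bigoplus_{j\neq i}\widetilde W_j$ need not be preserved by $\na$ once $|k|\ge2$. A two-dimensional example over $C$ already exhibits this: take $\ce=\pos Cx^{2}$ with $\na=\vt+\begin{pmatrix}0&x\\0&2\end{pmatrix}$, so the exponents are $0$ and $2$, with obvious lifts $\widetilde W_1=\pos Cx e_1$, $\widetilde W_2=\pos Cx e_2$. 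Shearing $\widetilde W_2$ by $x^{-2}$ gives $\ce'=\pos Cx e_1\oplus x^{-2}\pos Cx e_2$, but $\na(x^{-2}e_2)=x^{-1}e_1\notin\ce'$. So $\ce'$ is not a logarithmic model at all. (Shearing in single $x^{\pm1}$ steps does preserve $\na$-stability, but then your claim that ``the remaining summands' residues are genuinely unchanged'' fails: after one step the residue endomorphism acquires off-diagonal terms and you must redo the Jordan decomposition before the next step; the argument then essentially becomes the paper's induction.)

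The paper sidesteps this entirely by \emph{not} lifting to direct summands. It sets $\ce_\si:=\{e\in\ce:\tilde e\in V_\si\}$, the full preimage of each Jordan block. These $\ce_\si$ all contain $x\ce$, so they are far from complementary, but each is automatically $\na$-stable (since $\mm{res}_\na$ preserves $V_\si$) and satisfies $(\na-\si)^\mu(\ce_\si)\subset(x,\g r)\ce_\si$. The sheared model $\ce'=x^{k}\ce_\vr+\sum_{\si\neq\vr}\ce_\si$ is then a sum of $\na$-stable $\pos Rx$-submodules of $M$, hence $\na$-stable for any $k$. The exponents of $\ce'$ are controlled not by exhibiting a block decomposition of $\ce'/x\ce'$, but by checking that the operator $\bigl[\na-(\vr+k)\bigr]^{\mu}\prod_{\si\neq\vr}(\na-\si)^{\mu}$ sends $\ce'$ into $(x,\g r)\ce'$, which forces $\mm{Exp}(\ce')\subset(\mm{Exp}(\ce)\setminus\{\vr\})\cup\{\vr+k\}$. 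This is the piece you are missing; once you replace your $\widetilde W_i$ by these preimages, the rest of your outline (induction on the number of bad exponents, $(0:x)$ automatically zero since $\ce'\subset M$) goes through exactly as in the paper.
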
 
\begin{proof} 
Let $\ce$ be an arbitrary logarithmic model of $(M,\na)$ such that $(0:x)_\ce=0$ ---  and hence $\ce\subset M$. We shall proceed by reverse induction on the non-negative integer 
\[b(\ce):=\#\,\,\mm{Exp}(\ce)\setminus\tau.\]
Let us assume that $b(\ce)>0$ and let $\vr\in\mm{Exp}(\ce)\setminus\tau$. We shall construct a logarithmic model $\ce'$ such that $b(\ce')<b(\ce)$ and $(0:x)_{\ce'}=0$. Let $V:=\ce/x\ce$, this is an $R$-module, and consider its decomposition into ``generalized eigenspaces'', with respect to the residue morphism $\mm{res}_\ce$, 
\[
V=\bigoplus_{\si\in\mm{Exp}(\ce)} V_\si
\]
as in Lemma \ref{lem.202307--01}. 
(Each $V_\si$ is an $R$-module.) Note that there exists $\mu\in\NN$ such that, for each $\si$ and $v\in V_\si$, we have 
\[
(\mm{res}_\ce-\si)^\mu v\in \g rV_\si.
\]
In particular
\[
\prod_{\si\in\mm{Exp}(\ce)}(\mm{res}_\ce-\si)^\mu(V)\subset \g rV.
\]
The reduction map $\ce\to V$ shall be denoted by $e\mapsto\tilde e$. Then, if $\ce_\si:=\{e\in\ce\,:\,\tilde e\in V_\si\}$, we have 
\[\ce=\sum_{\si}\ce_\si.\]
Each $\ce_\si$ is stable under $\na$ because $\wt{\na e}=\mm{res}(\tilde e)$. 
 In addition, each $\ce_\si$ is   an $\pos Rx$-submodule of $\ce$ and 
 \[\tag{$\star$}
 (\na-\si)^\mu(\ce_\si)\subset(x,\g r)\ce_\si.
 \] 
Let $k\in\ZZ$ be such that $\vr+k\in\tau$. Define $\ce_\vr'=x^k\ce_\vr$ and 
\[\ce':=\ce_\vr'+\sum_{\si\not=\vr}\ce_\si,\]
which is an $\pos Rx$-submodule of $M$, stable under the action of $\na$. 
We now choose $e\in\ce_\vr$ and let $e':=x^ke\in\ce_\vr'$. By a direct verification, we know that 
\[
[\na-(\vr+k)]^\mu(e')=x^k[\na-\vr]^\mu(e).
\]
Because $[\na-\vr]^\mu(e)\in(x,\g r)\ce_\vr$, we then have 
\[\tag{$\dagger$}
[\na-(\vr+k)]^\mu(e')\in x^k\po(x,\g r)\ce_\vr\subset(x,\g r)\ce'.
\]
Consequently, from ($\star$) and ($\dagger$), the $R$--linear map 
\[
\left[\na-(\vr+k)\right]^\mu\prod_{\si\not=\vr}(\na-\si)^\mu
\]
sends $\ce'$ into $(x,\g r)\ce'$. Letting $V'=\ce'/x\ce'$, we conclude that  
\[ \left[\mm{res}_{\ce'}-(\vr+k)\right]^\mu\prod_{\si\not=\vr}(\mm{res}_{\ce'}-\si)^\mu(V')\subset\g rV',
\]
showing that $\mm{Exp}(\ce')\subset(\mm{Exp}(\ce)\setminus\{\vr\})\cup\{\vr+k\}$, which in particular proves that $b(\ce')<b(\ce)$. Obviously, $\ce'$, being contained in $M$ is such that $(0:x)_{\ce'}=0$.

\end{proof}

We now require a result in Commutative Algebra. 

\begin{lem}\label{faith--flatness}The following claims are true.
\begin{enumerate}[(i)]
\item The homomorphisms 
$\pos Rx\to \pos {\wh R}x$ and
$\lau Rx\to \lau {\wh R}x$ are faithfully flat.
\item A finite $\lau Rx$-module $E$  is flat  if and only if $\lau{\wh R}x\ot_{\lau Rx}E$ is $\lau{\wh R}x$-flat. A finite $\pos Rx$-module $\ce$ is free if and only if $\pos{\wh R}x\ot_{\pos Rx}\ce$ is $\pos{\wh R}x$--flat. 
\end{enumerate}
\end{lem}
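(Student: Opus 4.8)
The plan is to prove both parts of Lemma~\ref{faith--flatness} by routine commutative-algebra arguments resting on the fact that $\g r$-adic completion of a noetherian local ring is faithfully flat, combined with the local nature of freeness over the (non-local, but complete-in-the-relevant-topology) rings $\pos Rx$ and $\lau Rx$.

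For item (i), I would first observe that $\pos Rx\to\pos{\wh R}x$ and $\lau Rx\to\lau{\wh R}x$ are obtained, respectively, from $R\to\wh R$ by applying $R\ot_R\pos Rx\,(-)$ after identifying $\pos{\wh R}x=\wh R\,\wh\ot_R\,\pos Rx$; more precisely, since $\wh R$ is $R$-flat, one checks that $\pos{\wh R}x=\varprojlim_k\big(\wh R\ot_R (R[x]/x^{k+1})\big)$ is flat over $\pos Rx=\varprojlim_k R[x]/x^{k+1}$ by reducing flatness to the finite-level statement $R[x]/x^{k+1}\to\wh R\ot_R R[x]/x^{k+1}$, which is a base change of $R\to\wh R$ and hence flat, and then passing to the inverse limit using that everything in sight is noetherian and the modules are finitely generated (Artin--Rees / the Krull intersection theorem make the limit behave). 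Flatness of $\lau Rx\to\lau{\wh R}x$ then follows by inverting $x$. For faithful flatness it is enough to show the map is surjective on $\mathrm{Spec}$, or equivalently that $\g m\cdot\pos{\wh R}x\neq\pos{\wh R}x$ for every maximal ideal $\g m$ of $\pos Rx$; since $R\to\wh R$ is faithfully flat and induces a bijection on closed points lying over the maximal ideal $\g r$, and every maximal ideal of $\pos Rx$ contains $x$ and lies over $\g r$ (because $\pos Rx$ is $\g r$-adically and $x$-adically complete-ish, so its maximal ideals are exactly $(\g r,x)$-type), one reduces to faithful flatness of $R\to\wh R$ itself. The same argument, after inverting $x$, handles $\lau Rx\to\lau{\wh R}x$, where one uses that $\lau Rx$ is again local-ish enough: its maximal ideals all contract to $\g r$.

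For item (ii), the ``only if'' directions are immediate from (i): base change along a flat map preserves flatness, and base change along a faithfully flat map of a finite module preserves freeness (a finite module $\ce$ over $\pos Rx$ is free iff it is flat, since $\pos Rx$ is noetherian and $\ce$ is finitely presented and $\pos Rx$ has the property—via Proposition~\ref{rmk.20220607--01}-type reasoning or directly—that finite flat modules are free; alternatively argue locally). The substantive content is the ``if'' direction. For the first assertion: suppose $\lau{\wh R}x\ot_{\lau Rx}E$ is flat over $\lau{\wh R}x$; then by faithfully flat descent of flatness along $\lau Rx\to\lau{\wh R}x$ (item (i)), $E$ is flat over $\lau Rx$. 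For the second assertion: suppose $\pos{\wh R}x\ot_{\pos Rx}\ce$ is flat, hence free, over $\pos{\wh R}x$; by descent along the faithfully flat map $\pos Rx\to\pos{\wh R}x$, $\ce$ is flat over $\pos Rx$, and being finite over the noetherian ring $\pos Rx$ it is finitely presented; a finitely presented flat module is projective, and I then need to upgrade projective to free. Here I would use that $\pos Rx$ is $\g r$-adically complete with $\pos Rx/\g r\pos Rx=\pos Cx$, which is local (a complete DVR-like ring, in fact $\pos Cx$ is a local PID), so a finite projective $\pos Rx$-module has constant rank and, by Nakayama plus completeness (lift a basis of $\ce/\g r\ce$ over the local ring $\pos Cx$, then over each $\pos{R_k}x$, then pass to the limit as in the proof of Theorem~\ref{03.08.2023--1}), is free.

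The main obstacle I anticipate is the passage-to-the-limit / completeness bookkeeping needed to (a) establish flatness of $\pos Rx\to\pos{\wh R}x$ rigorously from the finite-level base changes, and (b) promote ``finite projective over $\pos Rx$'' to ``free'', since $\pos Rx$ is not itself local—one genuinely uses that it is $(\g r)$-adically complete and that the quotient $\pos Cx$ is local to pin down the rank and lift a basis. Both steps are standard but deserve care; everything else is formal descent. I would keep the writeup short by citing Matsumura for faithful flatness of completion and for ``finitely presented flat $=$ projective'', and citing the already-used $\g r$-adic completeness facts (Exercise~8.2 and Theorem~8.7 of \cite{Mat86}) for the limit arguments.
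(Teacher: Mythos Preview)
Your approach has a factual error that makes the whole argument much harder than it needs to be: you assert that ``$\pos Rx$ is not itself local,'' but in fact $\pos Rx$ \emph{is} a noetherian local ring with maximal ideal $(\g r,x)$. (A power series $\sum a_nx^n$ is a unit in $\pos Rx$ if and only if $a_0\in R^\times$.) Once you recognize this, the paper's proof is immediate: $\pos{\wh R}x$ is the $(\g r,x)$-adic completion of the noetherian local ring $\pos Rx$, so $\pos Rx\to\pos{\wh R}x$ is faithfully flat by the standard theorem on completions \cite[Theorem~8.14]{Mat86}; inverting $x$ gives the statement for $\lau Rx$. For (ii), faithfully flat descent of flatness \cite[Exercise~7.1]{Mat86} handles both directions at once, and ``finite flat $\Rightarrow$ free'' over the \emph{local} ring $\pos Rx$ is \cite[Theorem~7.10]{Mat86} with no further work.

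Your workarounds contain additional problems beyond being unnecessarily elaborate. The inverse-limit argument for flatness in (i) (``flat at each finite level, then pass to the limit by Artin--Rees'') is not a valid general principle and would need a real proof; the clean statement you want is precisely that completion of a noetherian ring along an ideal is flat, which you may as well cite directly. More seriously, your claim that the maximal ideals of $\lau Rx$ ``all contract to $\g r$'' is false: for instance, with $R=\pos Ct$, the ideal $(t-x)\lau Rx$ is maximal (the quotient is $\lau Cx$) but contracts to $(0)\subset R$, not to $(t)$. The correct way to get faithful flatness of $\lau Rx\to\lau{\wh R}x$ is simply to observe that it is the localization at $x$ of the faithfully flat map $\pos Rx\to\pos{\wh R}x$, and that a prime of $\pos{\wh R}x$ lying over a prime $\g p\subset\pos Rx$ with $x\notin\g p$ automatically avoids $x$ as well.
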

\begin{proof}
(i) Firstly,  $\pos {\wh R}x$ is $(\g r,x)$--adically complete \cite[Exercise~8.6]{Mat86}. Thus, we can view $\pos {\wh R}x$ 
as the $(\g r,x)$--adic completion of $\pos Rx$. 
As $\pos Rx$ is a noetherian local ring, we conclude that  $\pos {\wh R}x$ is
faithfully flat over $\pos Rx$   \cite[Theorem~8.14]{Mat86}. The fact that $\lau Rx\to \lau {\wh R}x$ is faithfully flat is a consequence of the fact that this mapping is obtained from $\pos Rx\to \pos {\wh R}x$ by inverting $x$.

(ii) This is \cite[Exercise 7.1]{Mat86} together with the fact that a finite module over a local noetherian ring is flat if and only if it is free \cite[Theorem 7.10]{Mat86}.
\end{proof}

\begin{thm}\label{thm.230704}
Let  $(M,\na)$ be a regular--singular connection  of $\lau Rx/R$, with $M$ being
flat as a $\lau Rx$--module. (That is, $(M,\na)$ is an object of $\mcrs^\circ(\lau Rx/R)$.) Then, $M$ possesses a logarithmic   model $\cm$ which, as an $\pos Rx$-module, is \textbf{free}, and in particular $M$ is a free $\lau Rx$-module. Moreover, the model $\cm$ can be chosen of the form $\mm{eul}(V,A)$, with $\mm{Exp}(\cm)\subset\tau$.
\end{thm}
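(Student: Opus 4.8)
The plan is to combine Proposition \ref{thm.20220607--04} (shearing), Theorem \ref{03.08.2023--1} (characterization of the Deligne--Manin model), and Lemma \ref{faith--flatness} (faithfully flat descent of freeness), using the completion $\wh R$ as a bridge between the Henselian and complete cases. First I would apply Proposition \ref{thm.20220607--04} to obtain a logarithmic model $\cm\in\mclog(\pos Rx/R)$ of $(M,\na)$ with $\mm{Exp}(\cm)\subset\tau$ and $(0:x)_\cm=0$. The issue is that over a merely Henselian $R$ we do not yet know $\cm$ is free; this is where the completion enters.

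Next I would base-change along $\pos Rx\to\pos{\wh R}x$, setting $\wh\cm:=\pos{\wh R}x\ot_{\pos Rx}\cm$. Since this map is (faithfully) flat by Lemma \ref{faith--flatness}(i), and since forming residues and reductions commutes with the flat base change $R\to\wh R$, the object $\wh\cm$ is a logarithmic model of $\wh M:=\lau{\wh R}x\ot_{\lau Rx}M$ with $\mm{Exp}(\wh\cm)\subset\tau$ and $(0:x)_{\wh\cm}=0$. Here one must check that $M$ being $\lau Rx$-flat forces $\wh M$ to be a regular-singular connection over the complete ring $\wh R$ whose underlying module is $\lau{\wh R}x$-flat; flatness is preserved by base change, and the regular-singular property passes because $\wh\cm$ is visibly a logarithmic model. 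Now $\wh R$ is complete, so Theorem \ref{03.08.2023--1} applies: because $\wh M$ is $\wh R$-flat (this itself needs a small argument — $M$ is $\lau Rx$-flat, hence $R$-flat by restriction of scalars? more carefully, one uses that flatness of $M$ over $\lau Rx$ together with faithful flatness of $\lau Rx$ over ... — I would instead invoke Proposition \ref{rmk.20220607--01} in reverse, or argue that $\wh M = \lau{\wh R}x\ot M$ is $\lau{\wh R}x$-flat and then that $\wh\cm$ is $\pos{\wh R}x$-flat), item (b) of that theorem tells us $\wh\cm$ is a \textbf{free} $\pos{\wh R}x$-module.

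Then I would descend: $\wh\cm=\pos{\wh R}x\ot_{\pos Rx}\cm$ is $\pos{\wh R}x$-flat, so by Lemma \ref{faith--flatness}(ii) the finite $\pos Rx$-module $\cm$ is free. Consequently $M=\cm[x^{-1}]=\lau Rx\ot_{\pos Rx}\cm$ is a free $\lau Rx$-module. Finally, to get the Euler form: $\cm$ is now a free $\pos Rx$-module with $\mm{Exp}(\cm)\subset\tau$, so Theorem \ref{thm.20220607--03} yields $\cm\cong\mm{eul}_{\pos Rx}(\cm/(x),\mm{res}_\na)$, and setting $(V,A):=(\cm/(x),\mm{res}_\na)\in\bb{End}_R^{\circ}$ (note $V=\cm/x\cm$ is free over $R$ since $\cm$ is free over $\pos Rx$) completes the proof, with $\mm{eul}(V,A)$ the desired model. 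The main obstacle I anticipate is the bookkeeping around flatness across the three rings $R$, $\pos Rx$, $\lau Rx$ and their completions — specifically, verifying cleanly that the hypothesis "$M$ is $\lau Rx$-flat" propagates to "$\wh\cm$ is $\pos{\wh R}x$-flat" so that Theorem \ref{03.08.2023--1}(b) is genuinely applicable; everything else is a routine chain of base-change and descent statements.
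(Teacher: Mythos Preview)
Your proposal is correct and follows exactly the paper's own proof: Proposition \ref{thm.20220607--04} for the model with exponents in $\tau$ and $(0:x)_\cm=0$, base-change to $\wh R$ and Theorem \ref{03.08.2023--1}(b) for freeness of $\wh\cm$ over $\pos{\wh R}x$, Lemma \ref{faith--flatness}(ii) to descend freeness to $\cm$, and Theorem \ref{thm.20220607--03} for the Euler form. Your flatness worry is harmless: $M$ flat over $\lau Rx$ gives $\wh M=\lau{\wh R}x\ot_{\lau Rx}M$ flat over $\lau{\wh R}x$ by base change, and since $\lau{\wh R}x$ is flat over $\wh R$ (as $\pos{\wh R}x$ is flat over $\wh R$), $\wh M$ is $\wh R$-flat, which is precisely the hypothesis needed for Theorem \ref{03.08.2023--1}(b).
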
 

\begin{proof}
Let $\cm\in\mclog(\pos Rx/R)$ be a model of $M$ 
as in Proposition \ref{thm.20220607--04}.  Then, $\pos{\wh R}x\ot_{\pos Rx}\cm\in\mclog(\pos{\wh R}x/R)$ is a model of $\lau{\wh R}x\ot_{\lau Rx}M$ as in Theorem \ref{03.08.2023--1} and hence, $\lau{\wh R}x\ot_{\lau Rx}M$ being flat over $\wh R$, it must be that $\pos{\wh R}x\ot_{\pos Rx}\cm$ is free over $\pos{\wh{R}}x$. It then follows that $\cm$ is $\pos Rx$-free, by Lemma \ref{faith--flatness}-(ii). Consequently, $M$ is free over $\lau Rx$. To verify the last claim, it suffices to employ Theorem \ref{thm.20220607--03}.
\end{proof}

\begin{thm}\label{thm.2022070-8--01}
The functor  
\[
\ga \mm{eul}_{\pos Rx}:\bb{End}_R^{{\circ}}\aro\mcrs^{{\circ}}(\lau Rx/R)
\] is faithful and essentially surjective. This functor is not full. Assume that $0\in\tau$; 
then
  restriction of $\ga \mm{eul}_{\pos Rx}$ to the full subcategory of all objects $(V,A)$ such that the spectrum of $A:V/\g r\to V/\g r$ is contained in  $\tau$,  is indeed full. 
\end{thm}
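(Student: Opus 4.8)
The plan is to prove the three assertions of Theorem \ref{thm.2022070-8--01} in turn, using the structural results of the previous subsection to reduce everything to linear algebra over $R$.

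\textbf{Faithfulness and essential surjectivity.} For faithfulness, suppose $\ph:(V,A)\to(V',A')$ is an arrow of $\bb{End}_R^\circ$ whose image $\ga\mm{eul}(\ph)$ vanishes. The induced map $\pos Rx\ot_RV\to\pos Rx\ot_RV'$ of logarithmic connections is $\pos Rx$-linear; localizing at $x$ kills nothing since $V,V'$ are $R$-flat (hence $\pos Rx\ot_RV$ is $x$-torsion free), so $\mm{eul}(\ph)=0$ as well. Evaluating at $1\ot v$, $v\in V$, and reducing mod $x$ gives $\ph(v)=0$; thus $\ga\mm{eul}$ is faithful. For essential surjectivity: given $(M,\na)\in\mcrs^\circ(\lau Rx/R)$, Theorem \ref{thm.230704} provides a free logarithmic model of the form $\mm{eul}(V,A)$ with $\mm{Exp}\subset\tau$, whence $M\simeq\ga\mm{eul}(V,A)$.

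\textbf{Failure of fullness.} A concrete counterexample in the spirit of \cite[Theorem 10.1]{HdST22} suffices: take two Euler connections whose exponents differ by a nonzero integer, say $(C,0)$ and $(C,1)$ (pulling back to $R$). Then $\ga\mm{eul}(C,0)\simeq\ga\mm{eul}(C,1)$ in $\mcrs(\lau Rx/R)$ via the isomorphism $1\mapsto x$, but there is no nonzero arrow $(C,0)\to(C,1)$ in $\bb{End}_R^\circ$ because a morphism must intertwine $0$ and $\mm{id}$. So $\ga\mm{eul}$ cannot be full.

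\textbf{Fullness on the restricted subcategory.} Assume $0\in\tau$ and restrict to objects $(V,A)$ with $\mm{Sp}(\ov A)\subset\tau$. Let $\ps:\ga\mm{eul}(V,A)\to\ga\mm{eul}(V',A')$ be any arrow in $\mcrs^\circ(\lau Rx/R)$; I must produce $\ph:(V,A)\to(V',A')$ with $\ga\mm{eul}(\ph)=\ps$. Form the logarithmic models $\cm=\mm{eul}_{\pos Rx}(V,A)$ and $\cm'=\mm{eul}_{\pos Rx}(V',A')$ inside $\ga\mm{eul}(V,A)$ and $\ga\mm{eul}(V',A')$. The key point is that $\ps$ must carry $\cm$ into $\cm'$: this is where the hypothesis on the spectra enters, via an argument about which lattices in a regular-singular connection are ``the'' logarithmic model with exponents in $\tau$ and no $x$-torsion. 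Concretely, consider $\ps(\cm)+\cm'\subset\ga\mm{eul}(V',A')$; it is a $\pos Rx$-submodule, and one checks (using the Euler form, i.e. the explicit action of $\na$, and the decomposition of Lemma \ref{lem.202307--01} applied to $A\oplus A'$) that it is again a logarithmic model with $(0:x)=0$ whose exponents lie in $\tau$ — here $0\in\tau$ guarantees that the ``new'' exponents introduced by taking a sum of lattices, which are integer shifts of old ones, stay in $\tau$, forcing those shifts to be zero. Invoking the uniqueness in Theorem \ref{03.08.2023--1} after base change to $\wh R$ (combined with faithful flatness, Lemma \ref{faith--flatness}) shows $\ps(\cm)\subset\cm'$. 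Restricting $\ps$ to the degree-zero part $\cm/x\cm=V\to\cm'/x\cm'=V'$ produces an $R$-linear $\ph:V\to V'$, and reading off the residue (which intertwines $\mm{res}_\na=A$ with $\mm{res}_{\na'}=A'$) shows $\ph$ is an arrow of $\bb{End}_R$; that $\ga\mm{eul}(\ph)=\ps$ follows because both are $\lau Rx$-linear, commute with $\na$, and agree on $1\ot V$, which generates.

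\textbf{Main obstacle.} The delicate step is showing $\ps(\cm)\subset\cm'$, i.e.\ that an abstract isomorphism of regular-singular connections respects the canonical logarithmic lattices once exponents are pinned to $\tau$ and $0\in\tau$. I expect to handle this by reducing to the complete case via $\wh R$, where Theorem \ref{03.08.2023--1} gives genuine uniqueness of the Deligne--Manin model, and then descending along the faithfully flat map $\pos Rx\to\pos{\wh R}x$; the role of $0\in\tau$ is precisely to rule out the integer-shift ambiguity illustrated by the counterexample to fullness.
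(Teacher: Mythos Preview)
Your treatment of faithfulness, essential surjectivity, and the counterexample to fullness is fine and matches the paper. The fullness argument, however, diverges from the paper and contains a genuine gap.

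The paper's proof of fullness is a direct coefficient computation: it forms the internal Hom $(H,T)=(\mm{Hom}_R(V,W),\,h\mapsto hA-Bh)$, identifies $\mm{Hom}_{\mc}(\ga\mm{eul}(V,A),\ga\mm{eul}(W,B))$ with the horizontal elements of $\ga\mm{eul}(H,T)$, and then, writing a horizontal section as $\bm h=\sum_i\bm h_ix^i$, reads off $T\bm h_i=-i\bm h_i$. Since the spectrum of $T_0$ on $H/\g r$ consists of differences of elements of $\tau$ and hence meets $\ZZ$ only in $\{0\}$, the same holds for each truncation $T_k$, forcing $\bm h_i\equiv0\pmod{\g r^{k+1}}$ for all $k$ when $i\not=0$; Krull separation then gives $\bm h_i=0$.

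Your route instead tries to show $\ps(\cm)\subset\cm'$ via uniqueness of the Deligne--Manin lattice over $\wh R$, and then to recover $\ph$ from the residue. The lattice step is morally correct (the exponents of $\ps(\cm)+\cm'$ do lie in $\tau$, simply because there is a surjection of log connections $\cm\oplus\cm'\twoheadrightarrow\ps(\cm)+\cm'$, so the reduced residue is a quotient endomorphism; your ``integer shifts'' explanation is misleading here). The problem is the final sentence: you assert that $\ga\mm{eul}(\ph)$ and $\ps$ ``agree on $1\ot V$''. They do not. All you have shown is that $\ps(1\ot v)\equiv 1\ot\ph(v)\pmod{x\cm'}$, i.e.\ that the two maps agree \emph{after reducing modulo $x$}. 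To conclude $\ps=\ga\mm{eul}(\ph)$ you must still prove that the higher Taylor coefficients of $\ps$ vanish, which is exactly the statement that a morphism of Euler log connections $\mm{eul}(V,A)\to\mm{eul}(V',A')$ with exponents in $\tau$ is determined by its residue. Proving this brings you straight back to the eigenvalue computation the paper performs (the $n$-th Taylor coefficient is a $T$-eigenvector of eigenvalue $n$, hence zero for $n\neq0$), or else requires a functorial form of Theorem~\ref{thm.20220607--03} that is not established. So the detour through lattice uniqueness and faithfully flat descent does not avoid the core argument; it merely postpones it, and as written your proof is incomplete at precisely that point.
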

\begin{proof}
 Essential surjectivity   is already verified by Theorem  \ref{thm.230704}, while  faithfulness is obvious. We then   concentrate on the verification of the last claim. Let $(V,A)$ and $(W,B)$ be objects of $\bb{End}_R^\circ$ and suppose that the eigenvalues of the $C$-linear endomorphisms   $A_0:V/\g r\to V/\g r$ and $B_0:W/\g r\to W/\g r$ associated respectively to $A$ and $B$ lie in $\tau$. On  $H=\mm{Hom}_R(V,W )$, consider the endomorphism $T:h\mapsto hA-Bh$; we then obtain an   object $(H,T)$ of $\bb{End}^\circ_R$. The spectrum of the $C$-linear endomorphism $T_0 :H/\g r\to H/\g r$ is built up from the differences of eigenvalues of $A_0$ and $B_0$  \cite[II, Problem 4.1]{Was76},  so that   $\mm{Sp}_{T_0}\cap\ZZ\subset\{0\}$. Consequently,
 for each $k\in\NN$, the spectrum of the $C$-linear endomorphism $T_k:H/\g r^{k+1}\to H/\g r^{k+1}$ contains no integers except perhaps 0.  This is because $\mm{Sp}_{T_k}=\mm{Sp}_{T_0}$ \cite[Prp. 8.11]{HdST22}. 
It is a simple matter to see that  $\hh{\mc}{\ga \mm{eul}(V,A)}{\ga \mm{eul}(W,B)}$ corresponds to the horizontal elements of $\ga \mm{eul}(H,T)$. After picking a basis of $H$, a horizontal section of $\ga eul(H,T)$ amounts to a vector $\bm h \in  \lau Rx^{r}$ 
such that 
\[
\vt\bm  h=-T\bm h.
\]
Writing $\bm h=\sum_{i\ge i_0}\bm h_ix^i$, we see that 
\[
T\bm h_i=-i\bm h_i.
\]
Now, let $i\not=0$. Then the image of  $\bm h_i$ in $R_{k}^{\op r}$ must be zero, since $i\not\in\mm{Sp}_{T_k}$. Hence, $\bm h_i=0$ \cite[Theorem 8.10(i)]{Mat86} except perhaps for $i=0$. This proves that any arrow 
\[
h:\ga \mm{eul}(V,A)\aro\ga \mm{eul}(W,B)
\]
comes from an arrow $V\to W$. 
\end{proof}

\subsection{The case where $R$ is a DVR}
Previously,  we   described the objects of $\mcrs^\circ$, but we still have no conclusions in general. So let us, in this section, add to the assumption that $R$ is Henselian the hypothesis 
\[\text{$R$ is a DVR and $\g r=Rt$.}\]
In this setting, we shall show that the functor
$\ga\mm{eul}_{\pos Rx}:\bb{End}_R\to \mcrs(\lau Rx/R)$ is \textit{essentially surjective}. See Corollary \ref{cor.12.07.22-1}. Part of this result was already achieved by Theorem \ref{thm.2022070-8--01}. It is a technique from \cite{DH18}, expressed in Proposition \ref{prop.20230730} below, which   allows the deduction of the general case.

\begin{prp}\label{prop.20230730}
Each object of $\mcrs(\lau Rx/R)$ is a quotient of a certain $(E,\na)\in\mcrs(\lau Rx/R)$ such that $E$ is a free $\lau Rx$-module.   
\end{prp}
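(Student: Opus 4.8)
The plan is to reduce to the known structure theory by realizing an arbitrary regular-singular connection as a quotient of one built from a free logarithmic model, controlling exponents along the way. Let $(M,\na)\in\mcrs(\lau Rx/R)$ be given, and by Proposition \ref{thm.20220607--04} choose a logarithmic model $\cm\in\mclog(\pos Rx/R)$ with $\mm{Exp}(\cm)\subset\tau$ and $(0:x)_\cm=0$. The module $\cm$ need not be free over $\pos Rx$, since $M$ is not assumed $\lau Rx$-flat; but $\cm$ is a \emph{finite} $\pos Rx$-module, so it is a quotient of a finite free $\pos Rx$-module $F$. The first task is to upgrade this to a surjection \emph{in} $\mclog$, i.e. to equip $F$ with a derivation $D$ compatible with $\vt$ so that $F\twoheadrightarrow\cm$ intertwines $D$ with the derivation of $\cm$, and then to apply $\ga$.

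First I would set $V:=\cm/x\cm$ with its residue endomorphism $\mm{res}_\cm$, and pick a finite free $R$-module $V_0$ with a chosen $R$-linear surjection $\pi_0\colon V_0\twoheadrightarrow V$. Using the Jordan decomposition over $R$ (Lemma \ref{lem.202307--01}) applied to $(V,\mm{res}_\cm)$ and a compatible lift, I can choose $V_0$ together with an $R$-linear endomorphism $A\colon V_0\to V_0$ lifting $\mm{res}_\cm$ (this is elementary: lift each generator's image, then lift the matrix of $\mm{res}_\cm$ in those generators). Set $(F,D):=\mm{eul}_{\pos Rx}(V_0,A)=(\pos Rx\ot_R V_0,D_A)$, which is an object of $\mclog(\pos Rx/R)$ with $\mm{Exp}(F)=\mm{Sp}_{\ov A}\subset\tau$ by construction. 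I then need an arrow $F\to\cm$ in $\mclog(\pos Rx/R)$ lifting $\pi_0$. Here I would exploit the universal/adjunction-type property of Euler connections together with $\g r$-adic (indeed $(x,\g r)$-adic) completeness of $\cm$: build the map degree by degree in the $x$-adic filtration, the obstruction at each step being governed by $\mm{res}_\cm - A$ acting on a finite $R$-module, and use that $\cm$ is $(x)$-adically separated and complete to pass to the limit — this is the same mechanism as in the proof of Theorem \ref{thm.20220607--03}. The resulting $\vp\colon F\to\cm$ reduces mod $x$ to $\pi_0$, hence is surjective by the $x$-adic completeness of $\cm$ and Nakayama-type reasoning (\cite[$0_{\mathrm I}$, 7.1.14]{ega}).

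Finally, apply $\ga$: since $\ga$ is exact (the two categories are abelian and $\ga$ is an $R$-linear exact functor), $\ga(\vp)\colon\ga(F)\to\ga(\cm)\simeq M$ is a surjection in $\mc(\lau Rx/R)$, and $\ga(F)=\ga\mm{eul}_{\pos Rx}(V_0,A)$ has underlying module $\lau Rx\ot_R V_0$, which is free over $\lau Rx$; it is regular-singular by definition since it comes from the logarithmic model $F$. Taking $(E,\na):=\ga(F)$ completes the argument. The main obstacle is the middle step: producing the $\mclog$-morphism $F\to\cm$ lifting a prescribed reduction mod $x$ and showing its surjectivity. The key points making it work are that $\mm{Exp}(F)\subset\tau$ (so the relevant "resonance" integers do not appear and the successive obstructions can be killed), and that $\cm$ is $(x,\g r)$-adically complete, permitting the inductive construction to converge; once surjectivity mod $x$ is in hand, the completeness of $\pos Rx$ and of $\cm$ gives genuine surjectivity.
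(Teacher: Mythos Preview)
Your approach is correct and genuinely different from the paper's. The paper argues by induction on the $t$--torsion index $r(M):=\min\{k:t^kM_{\mathrm{tors}}=0\}$, using crucially that $R$ is a DVR with uniformizer $t$: in the base case $r(M)=0$ the module $M$ is $R$--flat and hence free by Theorem~\ref{thm.230704}; for $r(M)=1$ it replaces the quotient $M/tM$ (an object of $\mcrs(\lau Cx/C)$, so an Euler connection over $C$) by an $R$--flat Euler lift and forms a fiber product; for higher $r(M)$ it iterates, invoking that subobjects of regular-singular connections are regular-singular. Your argument instead works directly at the level of a logarithmic model $\cm$ with $\mm{Exp}(\cm)\subset\tau$ and $(0:x)_\cm=0$, and builds a surjection from an Euler model $\mm{eul}_{\pos Rx}(V_0,A)$ by successive approximation in the $x$--adic filtration of~$\cm$. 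At stage $k\ge1$ the obstruction lives in $\mm{Hom}_R(V_0,x^k\cm/x^{k+1}\cm)\cong\mm{Hom}_R(V_0,V)$ and is killed by the operator $\psi\mapsto\psi A-(\mm{res}_\cm+k)\psi$; since $\bar A$ and $\bar{\mm{res}}_\cm$ both have spectrum in $\tau$, this operator is invertible modulo $\g r$ and hence surjective by Nakayama, and the process converges because $\cm$ is $(x)$--adically complete. This is more direct and, notably, never uses the DVR hypothesis: it works for any Henselian $R$ as in Section~\ref{20.09.2023--1}, so you have in fact proved a stronger statement than the paper.

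One minor point: the appeal to Lemma~\ref{lem.202307--01} is not really what is needed. The clean way to arrange $\mm{Sp}_{\bar A}\subset\tau$ is to take $V_0$ to be a \emph{minimal} free cover of $V=\cm/x\cm$ (so that $\bar\pi_0\colon\bar V_0\to\bar V$ is an isomorphism) and to lift $\mm{res}_\cm$ through $\pi_0$; then $\bar A$ is conjugate to $\bar{\mm{res}}_\cm$ and automatically inherits the correct spectrum.
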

\begin{proof}
The proof is almost identical to that  of \cite[Proposition~5.2.2]{DH18}, but some  care has to be taken to assure that the connections constructed are regular-singular. 

Let $(M,\na)\in\mcrs(\lau Rx/R)$ be given. The reader should recall that in view of Proposition \ref{rmk.20220607--01}, a necessary and sufficient condition for $N\in\mcrs(\lau Rx/R)$ to belong to $\mcrs^\circ(\lau Rx/R)$ is that $(0:t)_N=0$. 

Let us introduce, for a given finite $\lau Rx$-module $W$, the submodule 
\[
\begin{split}W_{\mathrm{tors}}&=\bigcup_{k=1}^\infty(0:t^k)_W
\\
&=\{w\in W\,:\,\text{some power of $t$ annihilates $w$}\}.
\end{split}
\]
Noetherianity assures that $W_{\mm{tors}}=(0:t^\ell)_W$ for some $\ell$ and we define 
\[
\begin{split}
r(W)&=\min\{k\in\NN\,:\,t^kW_{\mm{tors}}=0\}\\&=\min\{k\in\NN\,:\,W_{\mm{tors}}=(0:t^k)_W\}.
\end{split}
\]
The proposition is to be proved by induction on $r(M)$. (Note that for each $k$, the submodule $(0:t^k)_M$ is stable under $\na$.)
If $r(M)=0$, then $M_{\mm{tors}}=0$ and there is nothing to be done. Assume $r(M)=1$, so that $(tM)_{\mm{tors}}=0$. Let $q:M\to Q$ be the quotient by $tM$; since $Q$ is annihilated by $t$, this is an object of $\mcrs(\lau Cx/C)$ and as  such has the form $\ga\mm{eul}(V,A)$, where $V$ is a $C$-vector space \cite[Cor. 4.3]{HdST22}. This connection is certainly a quotient of the Euler connection 
\[\wt Q:=\ga\mm{eul}(R\ot_CV,\id_R\ot A),\]
which is an object of $\mcrs(\lau Rx/R)$. We then have a diagram with exact rows: 
\[
\xymatrix{0\ar[r]& t M\ar[r]&M\ar[r]^q&Q\ar[r]&0\\ 0\ar[r]& t M\ar[u]^\sim\ar[r]&\wt M\ar@{}[ru]|\square\ar[u] \ar[r]&\ar@{->>}[u] \wt Q\ar[r]&0,}
\]
where the rightmost square is cartesian and $\wt M\to M$ is in fact  surjective. Since $(t M)_{\mm{tors}}=\wt Q_{\mm{tors}}=0$, we have  $\wt M_{\mm{tors}}=0$. Since $\wt M$
is a subobject of $\wt Q\op M$,  we can appeal to \cite[Proposition~8.3]{HdST22} to assure that it is   regular-singular. In conclusion, $\wt M\in\mcrs^\circ(\lau Rx/R)$. 

Let us now assume that $r(M)>1$. Let $N=(0:t)_M$ and observe that $r(N)=1$. Denote by $q:M\to Q$ the quotient by $N$. It then follows that $t^{r(M)-1}Q_{\mm{tors}}=0$, so that $r(Q)\le r(M)-1$. By induction, there exists $\wt Q\in\mcrs^\circ(\lau Rx/R)$ and a surjection $\wt Q\to Q$. We arrive at a commutative diagram with exact rows
\[
\xymatrix{0\ar[r]& N\ar[r]&M\ar[r]^q&Q\ar[r]&0\\ 0\ar[r]& N\ar[u]^\sim\ar[r]&\wt M\ar@{}[ru]|\square\ar[u] \ar[r]&\ar@{->>}[u]\wt Q\ar[r]&0,}
\]
where the rightmost square is cartesian so that $\wt M\to M$ is surjective.  Since $\wt Q_{\mm{tors}}=0$, we conclude that $\wt M_\mm{tors}= N$, so that $r(\wt M)=1$. We can therefore find  $\wt M_1\in\mcrs^\circ(\lau Rx/R)$ and a surjection  $\wt M_1\to\wt M$ and consequently  a surjection $\wt M_1\to M$.
\end{proof}

\begin{cor}\label{cor.12.07.22-1}
The functor $\ga\mm{eul}_{\pos Rx}:\bb{End}_R\to \mcrs(\lau Rx/R)$ is \textit{essentially surjective}. 
\end{cor}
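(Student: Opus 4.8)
The plan is to produce, for an arbitrary $(M,\na)\in\mcrs(\lau Rx/R)$, a short exact sequence $0\aro K\aro E\aro M\aro0$ whose two left-hand terms lie in $\mcrs^{\circ}(\lau Rx/R)$, and then to recognise this sequence as the image under $\ga\mm{eul}$ of a short exact sequence in $\bb{End}_R$. I would begin by fixing a set $\tau$ with $0\in\tau$ (this is harmless, since the statement to be proved does not refer to $\tau$) and by invoking Proposition \ref{prop.20230730} to obtain an epimorphism $\pi\colon(E,\na_E)\aro(M,\na)$ with $E$ free over $\lau Rx$, so that $(E,\na_E)\in\mcrs^{\circ}(\lau Rx/R)$; I then set $(K,\na_K):=\mm{Ker}\,\pi$, a finite $\lau Rx$-submodule of $E$ that is stable under $\na_E$.

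The first real step is to show that $(K,\na_K)$ again belongs to $\mcrs^{\circ}(\lau Rx/R)$. Being a subobject of the regular--singular connection $(E,\na_E)$, it is regular--singular by \cite[Proposition 8.3]{HdST22}. For the flatness I would exploit the hypothesis that $R$ is a DVR: since $R$ is then a domain, $\lau Rx$ is $R$-torsion-free, so the free module $E$ is $R$-flat, and hence its submodule $K$ is $R$-torsion-free, thus $R$-flat; Proposition \ref{rmk.20220607--01} then forces $K$ to be flat over $\lau Rx$. This produces a short exact sequence $0\aro(K,\na_K)\aro(E,\na_E)\arou{\pi}(M,\na)\aro0$ in $\mc(\lau Rx/R)$ with $(K,\na_K),(E,\na_E)\in\mcrs^{\circ}(\lau Rx/R)$.

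Next I would use Theorem \ref{thm.230704} to choose objects $(V,A),(W,B)\in\bb{End}_R^{\circ}$ together with isomorphisms $\ga\mm{eul}(V,A)\simeq E$ and $\ga\mm{eul}(W,B)\simeq K$ in which all exponents lie in $\tau$; equivalently, writing $A_0$ and $B_0$ for the reductions of $A$ and $B$ modulo $\g r$, one has $\mm{Sp}_{A_0}\subset\tau$ and $\mm{Sp}_{B_0}\subset\tau$. Transporting the inclusion $K\hookrightarrow E$ along these isomorphisms gives an arrow $\ga\mm{eul}(W,B)\aro\ga\mm{eul}(V,A)$ of $\mc(\lau Rx/R)$, and since $0\in\tau$ and the two spectra are contained in $\tau$, the fullness clause of Theorem \ref{thm.2022070-8--01} lets me write this arrow as $\ga\mm{eul}(\be)$ for some $\be\colon(W,B)\aro(V,A)$ in $\bb{End}_R^{\circ}$; as $\ga\mm{eul}$ is exact and faithful, $\be$ is a monomorphism. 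Applying the exact functor $\ga\mm{eul}\colon\bb{End}_R\aro\mc(\lau Rx/R)$ to the short exact sequence $0\aro(W,B)\arou{\be}(V,A)\aro\mm{Coker}\,\be\aro0$ and comparing it with the sequence of the previous paragraph, I conclude that $(M,\na)\simeq\ga\mm{eul}(\mm{Coker}\,\be)$. Since $\mm{Coker}\,\be$ is an object of $\bb{End}_R$, this exhibits $(M,\na)$ in the essential image of $\ga\mm{eul}_{\pos Rx}\colon\bb{End}_R\aro\mcrs(\lau Rx/R)$, as desired.

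I expect the crux to be the verification that $(K,\na_K)\in\mcrs^{\circ}(\lau Rx/R)$: it is precisely here that the hypothesis ``$R$ is a DVR'' is used in an essential way, for over a merely Henselian base there is no reason for the kernel $K$ to be $R$-flat, and it is this $R$-flatness which, via Proposition \ref{rmk.20220607--01}, makes the two left-hand terms of the resolution accessible to Theorems \ref{thm.230704} and \ref{thm.2022070-8--01}. A secondary point requiring attention is to keep the exponents of the chosen Euler models inside $\tau$, because $\ga\mm{eul}_{\pos Rx}$ is not full in general and the fullness furnished by Theorem \ref{thm.2022070-8--01} is available only on the subcategory of objects whose spectrum lies in $\tau$.
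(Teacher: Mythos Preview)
Your proof is correct and follows essentially the same strategy as the paper's: cover $M$ by an object of $\mcrs^\circ$ via Proposition \ref{prop.20230730}, realise the two flat terms of the resulting presentation as Euler connections with exponents in $\tau$, invoke the fullness clause of Theorem \ref{thm.2022070-8--01} to lift the connecting map to $\bb{End}_R$, and take the cokernel there. The one difference is that the paper applies Proposition \ref{prop.20230730} a second time to the kernel of $F\to M$ to obtain a presentation $E\to F\to M\to 0$ with both $E$ and $F$ in $\mcrs^\circ$, whereas you observe directly that the kernel $K$ already lies in $\mcrs^\circ$ because $R$ is a DVR and $K$ sits inside the $R$-torsion-free module $E$; your route saves one invocation of Proposition \ref{prop.20230730} and makes the use of the DVR hypothesis at this step a bit more explicit.
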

\begin{proof} 
We assume that $0\in\tau$. Let $M\in\mcrs(\lau Rx/R)$ be given; because of Proposition \ref{prop.20230730}, we can find an exact sequence in $\mcrs(\lau Rx/R)$: 
\[
E\stackrel\Ph\aro F\aro M\aro 0
\]
where $E$ and $F$ belong to $\mcrs^\circ(\lau Rx/R)$. According to Theorem \ref{thm.2022070-8--01}, we can assume that 
\[
E= \ga{\rm eul}_{\pos Rx}(V,A)\quad
\text{and}
\quad F=\ga{\rm eul}_{\pos Rx}(W,B),
\]  
where $(V,A)$ and $(W,B)$ belong to $\bb{End}_R^{\circ}$ and the spectra of 
\[
V/(t)\arou A V/(t)\quad
\text{and}
\quad
W/(t)\arou B W/(t)
\]
 are all contained in $\tau$. In this case, $\Ph=\ga\mm{eul}_{\pos Rx}(\ph:V\to W)$, again by Theorem \ref{thm.2022070-8--01}, and hence $M$ is isomorphic to $\ga\mm{eul}_{\pos Rx}(\mathrm{Coker}\,\ph)$.
\end{proof}

\section{Structure of $\mcrs(R[x^\pm]/R)$}\label{sect.4}
Our aim in this section is to relate $\bb{End}_R$ and $\mcrs(R[x^\pm]/R)$
and obtain the equivalent of Corollary \ref{cor.12.07.22-1} in this setting. Our strategy is different from the one in the previous section. Instead of using the shearing technique, we rely on Popescu's approximation theorem to descend from $\wh R$ to $R$.

We fix a choice of local coordinates of $\PP_R^1$ as follows: write $\PP_R^1$ as the union of two affine lines $\mathds A_0$ and $\mathds A_{\infty},$ where $\mathds A_0=\mathrm{Spec}(R[x])$ and $\mathds{A}_{\infty}=\mathrm{Spec}(R[y]),$ with the transition function on their intersection $R[x^\pm]= R[y^\pm]$ being $y=x^{-1}$.  

By the equality $y=x^{-1}$ we have
\[
x\frac{d}{dx}=-y\frac{d}{dy},
\]
and therefore $\vartheta:R[x^\pm]\to R[x^\pm]$ can be extended canonically to a global section, still denoted by $\vartheta,$ of the tangent sheaf of $\PP_R^1$. 

\begin{dfn}[Connection on the punctured affine line]\label{dfn.20220607--08}
The category of  $R$--connections on $R[x^\pm]$,  or of connections on $R[x^\pm]/R$, or on 
the punctured affine line
 $\PP_R^1\setminus\{0,\infty\}$, etc,  denoted $\mc(R[x^\pm]/R)$, has for   
 
 \begin{enumerate}\item[\it objects] those couples $(M,\na)$ consisting of a \textit{$R[x^\pm]$--module of finite presentation} and an $R$--linear endomorphism $\na:M\to M$ satisfying Leibniz's rule \[\na(fm)=\vt(f)m+f\na(m);\] 
\item[\it arrows] between $(M,\na)$ and $(M',\na')$ are just $R[x^\pm]$--linear maps $\ph:M\to M'$ satisfying $\na'\ph=\ph\na$.  
\end{enumerate}  
\end{dfn}

It is well-known that for a connection $(M,\nabla)$ on $R[x^\pm]/R$, 
a necessary and sufficient condition for $M$ to be $R[x^\pm]$--flat is that it be $R$-flat, cf., e.g. \cite[p.82]{dS09} or \cite[Proposition 5.1.1]{DH18}. (We profit to note that in the proof of Proposition 5.1.1 in \cite{DH18}, we need to employ the ``fiber-by-fiber flatness criterion'' \cite[$\mathrm{IV}_3$, 11.3.10]{ega} and not the ``local flatness criterion.'') Moreover, although these references are written in the context where $R$ is a DVR, the idea of proof applies in more generality since it is a consequence of the ``fiber-by-fiber flatness criterion''  \cite[$\mathrm{IV}_3$, 11.3.10]{ega} and the well-documented case of a base field of characteristic zero. See \cite[Remark 8.20]{HdST22} for more details and references.

\begin{dfn}[Logarithmic connections on the punctured affine line] 
\label{dfn.20220607--08b}
The category of {\it logarithmic connections} on the punctured affine line, denoted $\mclog(\PP_R^1/R)$, has for
\begin{enumerate}\item[\it objects] those couples $(\cm,\na)$ consisting of a coherent $\co_{\PP_R^1}$--module and an $R$--linear endomorphism $\na:\cm\to \cm$ satisfying Leibniz's rule $\na(fm)=\vt(f)m+f\na(m)$ on all open subsets; and
\item[\it arrows] between $(\cm,\na)$ and $(\cm',\na')$ are   $\co_{\PP_R^1}$--linear maps $\ph:\cm\to \cm'$ satisfying $\na'\ph=\ph\na$.  
\end{enumerate}  
\end{dfn}

We let
\[
\ga: \mclog(\PP_R^1/R)\aro\mc(R[x^\pm]/R)
\]
be the natural restriction functor.

\begin{dfn}
[Regular-singular connections on the punctured affine line]\ {}
\begin{enumerate}[(1)]
\item 
A connection $(M,\na)$ in $\mc(R[x^\pm]/R)$ is  {\it regular-singular} if $\ga(\cm)\simeq M$ for a certain $\cm\in\mclog(\PP_R^1/R)$; in this case, any such $\cm$ is a  {\it logarithmic model} of $M$. 
\item The full subcategory of $\mc(R[x^\pm]/R)$ having regular-singular connections as objects   is denoted by $\mcrs(R[x^\pm]/R)$.
\item The full subcategory of $\mc(R[x^\pm]/R)$ having as objects those connections $(M,\na)$
with $M$ being a flat $R[x^\pm]$-module is denoted by $\mcrs^\circ(R[x^\pm]/R)$. 
\end{enumerate}
\end{dfn}

 The prime example of regular-singular connections is described now: 

\begin{ex}[Euler connections]\label{ex.20220709--01}
For an object $(V,A)\in\bb{End}_R$,  we set 
$$\mm{eul}_{\PP^1_R}(V,A):= (\mathcal{O}_{\PP^1_R} \otimes_R V, D_A),$$
where $D_A: \mathcal{O}_{\PP_R^1} \otimes_R V \rightarrow \mathcal{O}_{\PP_R^1} \otimes_R V$ is $R-$linear and defined by 
$$D_A(f\otimes m) = \vartheta (f) \otimes v + f \otimes Av$$
 on any open subsets of $\PP_R^1$. 
\end{ex}

Thus we have a functor  
$
\mm{eul}_{\PP_R^1}:\bb{End}_R\aro\mclog(\PP_R^1/R)$
and, composing it with $\gamma$, another the functor
\[\gamma\mm{eul}_{\PP_R^1}: \bb{End}_R\aro
\mcrs(R[x^\pm]/R).
\]

For the next theorem, we shall require the notion of $G$-rings \cite[Section 32]{Mat86}.  A field is a $G$-ring  as is a   discrete valuation ring of characteristic zero. Other   relevant $G$-rings are noetherian complete local rings \cite[Theorem 32.3]{Mat86},  rings of finite type over  $G$-rings \cite[33.G]{mat2}  and Henselizations of local $G$-rings (use   \cite[Theorem 32.1]{Mat86} and \cite[Theorem 32.2]{Mat86}). That this concept is necessary here comes from its role in the Popescu approximation theorem. 
 
\begin{thm}\label{prop.20220607--03}
We assume that $R$ is   Henselian in all that follows. 
\begin{enumerate}[(i)] \item Suppose that $R$ is a  $G$-ring.  Then the functor 
\[
\ga \mm{eul}_{\PP_R^1}: 
\bb{End}_R^\circ  \aro \mcrs^\circ(R[x^{\pm}]/R)
\]
 is faithful and essentially surjective.
 \item Suppose that  $R$ is a discrete valuation ring. (In which case $R$ is also a G-ring.) Then the functor     
\[
\ga \mm{eul}_{\PP_R^1}: 
\bb{End}_R   \aro \mcrs(R[x^{\pm}]/R)
\]
is faithful and essentially surjective. 
\end{enumerate}
\end{thm}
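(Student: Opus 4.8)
The strategy is to reduce both statements to their counterparts over the formal punctured disk, which were settled in Corollary \ref{cor.12.07.22-1} (and Theorem \ref{thm.2022070-8--01}), by means of Popescu's approximation theorem. The first observation is that faithfulness is automatic in both cases: the restriction of an Euler connection to a dense open (say $\mm{Spec}\,R[x^\pm]$ itself, or the formal disk) recovers the module $V$ with its endomorphism, so a morphism of Euler connections that dies must come from the zero morphism $V\to W$. Hence the real work is \emph{essential surjectivity}, and the heart of the matter is to show that every object of $\mcrs^\circ(R[x^\pm]/R)$ (resp. $\mcrs(R[x^\pm]/R)$ when $R$ is a DVR) is, up to isomorphism, of the form $\ga\mm{eul}_{\PP^1_R}(V,A)$.

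For part (i), I would first pass to the completion: by \cite[Theorem 10.1]{HdST22} (the complete case of Deligne's equivalence), once we base-change a given $(M,\na)\in\mcrs^\circ(R[x^\pm]/R)$ to $\wh R$, it becomes $\ga\mm{eul}_{\PP^1_{\wh R}}(\wh V,\wh A)$ for some $(\wh V,\wh A)\in\bb{End}_{\wh R}^\circ$; equivalently, $(M,\na)$ admits, after completing, a logarithmic model on $\PP^1_{\wh R}$ which is free and of Euler type with exponents in $\tau$. The point of invoking that $R$ is a G-ring — and here Henselian, so that $\wh R$ is a filtered colimit of smooth $R$-algebras by Popescu — is that the datum ``$(M,\na)$ is isomorphic to an Euler connection $\ga\mm{eul}_{\PP^1_R}(V,A)$ with $\mm{Sp}$ of the residual endomorphism in $\tau$'' is of finite presentation over $R$ in the appropriate sense: it is cut out by finitely many equations (a trivializing basis of a logarithmic model on each of the two charts $\mathds A_0$, $\mathds A_\infty$, the matrix of $\na$ in that basis being the Euler matrix, the gluing on $R[x^\pm]$, and the horizontality equations identifying it with $M$). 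Therefore a solution over $\wh R=\mm{colim}\,R_\alpha$ descends to a solution over some smooth $R$-algebra $R_\alpha$, and then — using that $R\to\wh R$ factors through $R_\alpha$ together with the Henselian (indeed $C$-point) structure, so that $R_\alpha$ has an $R$-point lifting the closed point — we specialize back along $R_\alpha\to R$ to obtain $(V,A)\in\bb{End}_R$ with $\ga\mm{eul}_{\PP^1_R}(V,A)\simeq M$. One must check along the way that specialization preserves $R$-flatness of $V$ (so $(V,A)\in\bb{End}_R^\circ$) and that the residual spectrum stays in $\tau$; both follow from openness/closedness of the relevant loci and the fact that the spectrum of the residue of an Euler matrix is locally constant (cf. \cite[Prp. 8.11]{HdST22}).

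For part (ii), with $R$ a DVR with uniformizer $t$, the flatness hypothesis is dropped, so one first reduces to the flat case by the presentation technique: by the analogue of Proposition \ref{prop.20230730} on $\PP^1_R$ — which runs identically, replacing $\lau Rx$-modules with $R[x^\pm]$-modules and using that $R$-flat $=$ $R[x^\pm]$-flat for connections (the fiber-by-fiber criterion recalled after Definition \ref{dfn.20220607--08}) — any $M\in\mcrs(R[x^\pm]/R)$ fits in an exact sequence $E\to F\to M\to 0$ with $E,F\in\mcrs^\circ(R[x^\pm]/R)$. By part (i) (a DVR of characteristic zero is a G-ring, as noted), $E=\ga\mm{eul}_{\PP^1_R}(V,A)$ and $F=\ga\mm{eul}_{\PP^1_R}(W,B)$, and after a preliminary shearing we may assume the residual spectra of $A$ and $B$ lie in $\tau$ (with $0\in\tau$). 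Then the morphism $E\to F$ is itself induced by a morphism $\ph\colon(V,A)\to(W,B)$ in $\bb{End}_R$ — this is the fullness statement, which I would establish exactly as in the proof of Theorem \ref{thm.2022070-8--01}: horizontal sections of $\ga\mm{eul}_{\PP^1_R}(\mm{Hom}_R(V,W),T)$, with $T(h)=hA-Bh$, are forced to be constant because $\mm{Sp}_{T_k}\cap\ZZ\subset\{0\}$ kills all Laurent coefficients except in degree $0$, using that on $R[x^\pm]$ a global horizontal section with no pole and no zero-order pole at $\infty$ is constant. Hence $M\simeq\ga\mm{eul}_{\PP^1_R}(\mm{Coker}\,\ph)$, which proves essential surjectivity.

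The main obstacle I anticipate is the bookkeeping in the descent step of part (i): making precise that ``being an Euler connection with exponents in $\tau$'' is a finitely presented condition so that Popescu applies cleanly, and then controlling the specialization $R_\alpha\to R$ so that $R$-flatness of the underlying module and the location of the residual spectrum in $\tau$ are both preserved. In other words, the analytic content is already in Corollary \ref{cor.12.07.22-1} and \cite[Theorem 10.1]{HdST22}; the labor here is a careful translation of ``isomorphic to an Euler connection'' into an affine scheme of finite type over $R$, together with the standard but delicate Popescu-descent argument. I would isolate this as a lemma (``the functor of Euler structures on a given connection is locally of finite presentation'') before running the argument above.
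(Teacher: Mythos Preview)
Your approach to (i) is essentially that of the paper: base-change to $\wh R$, invoke \cite[Theorem 10.1]{HdST22} together with Theorem~\ref{thm.2022070-8--01} to obtain an Euler form over $\wh R$, then Popescu-descend to a smooth $R$-algebra $S_\gamma$ and specialize along a section $\xi_\gamma:S_\gamma\to R$ furnished by Hensel's lemma. The paper makes the descent concrete via \cite[$\mathrm{IV}_3$, 8.5.2]{ega} rather than packaging ``Euler structure'' as a finitely presented functor, but the content is the same. Your worry about preserving $R$-flatness under specialization is harmless: once $M\simeq R[x^\pm]\otimes_R V$, the module $V$ sits as an $R$-direct summand of $M$, so flatness transfers.

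Your route to (ii), however, diverges from the paper's. The paper treats (i) and (ii) by a \emph{single} Popescu argument: in case (ii) one simply replaces the input Theorem~\ref{thm.2022070-8--01} by Corollary~\ref{cor.12.07.22-1} (which already produces an Euler form over $\wh R$ for non-flat objects), and the remainder of the descent is verbatim. You instead bootstrap from (i): establish an $R[x^\pm]$-analogue of Proposition~\ref{prop.20230730}, write $M$ as a cokernel of a map between flat objects, apply (i) to those, shear so that exponents lie in $\tau$, and then prove fullness of $\ga\mm{eul}_{\PP^1_R}$ on such objects so that the map descends to $\bb{End}_R$. This works, but two points deserve more than ``runs identically''. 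First, the proof of Proposition~\ref{prop.20230730} invokes \cite[Proposition~8.3]{HdST22} (closure of $\mcrs$ under subobjects) at the step where the pullback $\wt M\subset \wt Q\oplus M$ is shown to be regular-singular; you need the analogous statement for $R[x^\pm]$, which is not cited but follows by taking, inside a logarithmic model of $\wt Q\oplus M$ on $\PP^1_R$, the coherent $\na$-stable subsheaf of sections landing in $\wt M$. Second, your ``preliminary shearing'' must be carried out on $R[x^\pm]$: this is fine because $\ga\mm{eul}_{\PP^1_R}(V_\sigma,A|_{V_\sigma})\simeq\ga\mm{eul}_{\PP^1_R}(V_\sigma,A|_{V_\sigma}+k)$ via multiplication by $x^{-k}$, combined with Lemma~\ref{lem.202307--01}. (Alternatively, choose the Hensel section $\xi_\gamma$ to lift the \emph{specific} $C$-point $S_\gamma\to\wh R\to C$, so that the residual spectrum of $(V,A)$ coincides with that of $(\g V,\g A)\subset\tau$.)

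The trade-off: the paper's uniform descent is shorter and avoids re-proving Proposition~\ref{prop.20230730} and the fullness lemma in the algebraic setting; your approach is more modular and exhibits (ii) as a formal consequence of (i) plus a presentation, at the cost of those auxiliary verifications. Note also that the paper derives the $R[x^\pm]$-analogue of Proposition~\ref{prop.20230730} (Corollary~\ref{18.09.2023--1}) \emph{from} the theorem, so your logical order is the reverse of the paper's.
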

\begin{proof}Faithfulness is obvious, in any case, and we proceed to verify essential surjectivity. We shall deal with cases (i) and (ii) at the same time.  
The idea is to  first base change to $\wh R$, use the known results from \cite{HdST22}, and then descend back to $R$ by means of Popescu's theorem.

The map $R\to\wh R$ is regular, by assumption in case (i), and because $R$ is of characteristic zero in case (ii). According to Popescu (see \cite[Theorem 2.5]{Pop86} or   \cite[Theorem 1.1]{spivakovsky99}),
\[
\wh R=\lid_{\la\in L}S_\la
\]
where each $S_\la$ is a {\it smooth} $R$--algebra.

Let $(M,\na)\in\mc_{\rm rs}(R[x^\pm]/R)$.
For each $\la\in L$, we let $(M_\la,\na_\la)$ stand for the object of $\mc(S_\la[x^\pm]/S_\la)$ defined, in an evident manner, by employing the functor $S_\la\ot_R(-)$. We define $(\wh M,\wh\na)$ in similar fashion.

Let us for a moment assume that $(M,\na)\in\mcrs^\circ(R[x^\pm]/R)$ to treat case (i). 
Let $\g A:\g V\to \g V$ be an endomorphism of a certain finite $\wh  R$-module $\g V$ such that  there exists an {\it isomorphism}
\begin{eqnarray*}
  (\wh M,\wh\na)&\arou{\g f}&(\wh R[x^\pm]\ot_{\wh R}\g V,D_{\g A})\\
  &&=\ga\mm{eul}_{\PP_R^1}(\g V,\g A)
\end{eqnarray*}
 in $\mc(\wh R[x^\pm]/\wh R)$. The existence of this arrow is a consequence of \cite[Theorem~10.1]{HdST22} and Theorem \ref{thm.2022070-8--01}. 
Clearly, $(\g V,\g A)\in\bb{End}_R^\circ$ in this case.  
 
 If we now drop the assumption that $(M,\na)\in\mcrs^\circ(R[x^\pm]/R)$, but decree that $R$ is a DVR in order to work on case (ii), then  \cite[Theorem~10.1]{HdST22},
 in conjunction with 
 Corollary \ref{cor.12.07.22-1}, ensure the existence of the isomorphism  $\g f$ as before. 
Granted the existence of $\g f$, the hypothesis in (i) and (ii) now have little bearing on what follows. 

There exists $\al$ such that $\g A:\g  V\to\g V$ is of the form
\[
\id_{\wh R}\otu{S_\al}A_\al: \wh R\otu{S_\al}V_\al\aro \wh R\otu{S_\al}V_\al
\]
where $A_\al$ is an $S_\al$-linear endomorphism of the finite $S_\al$-module $V_\al$, see \cite[$\mathrm{IV}_3$, 8.5.2(i)-(ii), p.20]{ega}.
Given $\la\ge\al$, let $A_\la:V_\la\to V_\la$ be the base-changed endomorphism 
\[
\id\ot A_\al:S_\la\otu{S_\al}V_\al\aro S_\la\otu{S_\al}V_\al.
\]

This allows us to define objects   
\[
(S_\la[x^\pm]\ot_{S_\la}V_\la,D_{A_\la}) 
\] 
from $\mc(S_\la[x^\pm]/S_\la)$,  for all $\la\ge\al$, along the lines of Example \ref{ex.20220709--01}.

There exists $\be\ge\al$ such that $\g f$ is obtained from  a certain mapping of $S_\be[x^\pm]$--modules
\[
 f_\be\,:\,   M_\be   \,\,\aro\,\,   S_\be[x^\pm]\otu{S_\al}V_\al
\]
by base change $S_\be\to \wh R$, see \cite[$\mm{IV}_3$, 8.5.2.1, p.20]{ega}. Note, in addition, that $f_\be$ can be taken to be an {\it isomorphism} of $S_\be[x^\pm]$-modules. 
Let $f_\la$ be the base change of $f_\be$ for $\la\ge\be$.

 Let now $\{m_i\}\in  M$ be a  set of $R[x^\pm]$-module generators for $M$ and write
 $m^\la_i$ for the image of $m_i$ in $M_\la$ via the natural arrow $M\to M_\la$.
Consider, for each $\la\ge\be$, the elements 
\[
\de_i^\la:=f_\la( \na_\la (m^\la_{i}) ) -  D_{A_\la}(f_\la(m^\la_i) )
\]
of $S_\la[x^\pm]\ot_{S_\al}V_\al$. We then conclude that for some $\ga\ge\be$, the elements $\de_i^\ga$ are all zero, and hence  the arrow
\[f_\ga: M_\ga\aro S_\ga[x^\pm]\ot_{S_\al} V_\al\]
is horizontal, as  is verified without much effort.

Because $C$ is algebraically closed, it is clear that $C\to C\ot_RS_\ga$ has a section $C\ot_RS_\ga\to C$ and hence ``Hensel's Lemma'' \cite[$\mm{IV}_4$, Theorem 18.5.17]{ega} shows that $R\to S_\ga$ also has a section $\xi_\ga:S_\ga\to R$. Base changing the morphism $f_\ga$ through $\xi_\ga$, we obtain an isomorphism of connections  $M\to R[x^\pm]\ot V$.
It is clear that if $M$ is $R$-flat, then $V$ is also $R$-flat. 
\end{proof}

\begin{cor}Let us instate the assumptions of Theorem \ref{prop.20220607--03}. Then, if $(M,\na)$ is an object of $\mcrs^\circ(R[x^\pm]/R)$, it follows that $M$ is in fact a free $R[x^\pm]$--module. \qed
\end{cor}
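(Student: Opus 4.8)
The plan is to combine the essential surjectivity statement of Theorem \ref{prop.20220607--03} with the structural information already available over the complete base $\wh R$. Concretely, let $(M,\na)$ be an object of $\mcrs^\circ(R[x^\pm]/R)$. By Theorem \ref{prop.20220607--03}(i) there exist $(V,A)\in\bb{End}_R^\circ$ and an isomorphism $(M,\na)\simeq\ga\mm{eul}_{\PP_R^1}(V,A)$ in $\mc(R[x^\pm]/R)$; in particular $M\simeq R[x^\pm]\ot_RV$ as an $R[x^\pm]$-module. Since $(V,A)\in\bb{End}_R^\circ$, the module $V$ is by definition $R$-flat, hence (as $R$ is local noetherian) $R$-free of some finite rank $n$. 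Therefore $M\simeq R[x^\pm]\ot_R R^n\simeq R[x^\pm]^{\,n}$ is a free $R[x^\pm]$-module, which is exactly the assertion.

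Thus essentially nothing new has to be proved: the corollary is an immediate bookkeeping consequence of Theorem \ref{prop.20220607--03}(i), once one recalls that objects of $\bb{End}_R^\circ$ have free underlying module and that $\mm{eul}_{\PP_R^1}(V,A)$ has underlying sheaf $\co_{\PP_R^1}\ot_RV$, whose module of sections over $\mm{Spec}\,R[x^\pm]$ is $R[x^\pm]\ot_RV$. I would simply point to these two facts and to the displayed chain of isomorphisms above; there is no genuine obstacle, which is presumably why the statement is given with a \qed and no written-out proof.

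If one prefers a self-contained argument that does not first pass through the full classification, the alternative route is: base change to $\wh R$, invoke the complete case to see that $\wh R[x^\pm]\ot_{R[x^\pm]}M$ is a free $\wh R[x^\pm]$-module (this is contained in \cite[Theorem~10.1]{HdST22}, or can be extracted from the logarithmic-model statement on $\PP^1_{\wh R}$ together with $\wh R$-flatness), and then descend freeness along the faithfully flat homomorphism $R[x^\pm]\to\wh R[x^\pm]$ using the standard fact that a finite module over a local noetherian ring is free if and only if it becomes free after a faithfully flat base change. The faithful flatness of $R[x^\pm]\to\wh R[x^\pm]$ follows, exactly as in Lemma \ref{faith--flatness}(i), from inverting $x$ in the faithfully flat map $R[x]\to\wh R[x]$. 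The only mildly delicate point in this variant is phrasing the descent of freeness correctly: one needs that $M$ be finite (equivalently, of finite presentation, which is built into Definition \ref{dfn.20220607--08}) so that flatness of $\wh R[x^\pm]\ot M$ forces flatness of $M$ over $R[x^\pm]$, and then local freeness of $M$ at every prime of $R[x^\pm]$ — but since we already know $M\simeq R[x^\pm]^n$ from the first route, this subtlety evaporates. I would therefore present the short argument of the first paragraph as the proof.
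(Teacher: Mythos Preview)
Your proof is correct and is exactly the argument the paper intends: the corollary follows immediately from the essential surjectivity in Theorem \ref{prop.20220607--03}(i), since $\ga\mm{eul}_{\PP_R^1}(V,A)$ has underlying module $R[x^\pm]\ot_RV$ with $V$ free over the local ring $R$. The alternative descent argument you sketch is unnecessary (and, as you note, slightly awkward because $R[x^\pm]$ is not local), but the first paragraph alone already matches the paper's implicit one-line justification.
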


\begin{cor}\label{18.09.2023--1}Let us instate  the assumptions of Theorem \ref{prop.20220607--03}-(ii). Then, each object of $\mcrs(R[x^\pm]/R)$ is a quotient of an object of    $\mcrs^\circ(R[x^\pm]/R)$.  
\end{cor}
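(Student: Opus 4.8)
The plan is to transplant the proof of Proposition \ref{prop.20230730} from the formal punctured disk to the punctured affine line. Recall first that, by the discussion following Definition \ref{dfn.20220607--08} and since $R$ is a DVR, an object $N$ of $\mcrs(R[x^\pm]/R)$ lies in $\mcrs^\circ(R[x^\pm]/R)$ if and only if $N$ is $R$-flat, equivalently (no element of $N$ being killed by a power of $t$) if and only if $(0:t)_N=0$. For a finite $R[x^\pm]$-module $W$, put $W_{\mathrm{tors}}=\bigcup_{k\ge1}(0:t^k)_W$, which by noetherianity equals $(0:t^\ell)_W$ for $\ell\gg0$, and set $r(W)=\min\{k:t^kW_{\mathrm{tors}}=0\}$; each $(0:t^k)_M$ is stable under $\na$. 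I would prove the claim by induction on $r(M)$, the case $r(M)=0$ being trivial.

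For $r(M)=1$, let $q\colon M\to Q:=M/tM$. Since $t$ kills $Q$, it is an object of $\mcrs(C[x^\pm]/C)$ — regular-singular because if $\cp$ is a logarithmic model of $M$ on $\PP^1_R$ then $\cp/t\cp$ is one of $Q$. By Theorem \ref{prop.20220607--03}(i) applied with $R=C$ (so that $\mcrs^\circ(C[x^\pm]/C)=\mcrs(C[x^\pm]/C)$, $C$ being a field), equivalently by Deligne's classical theorem \cite{Del87}, we have $Q\simeq\ga\mm{eul}_{\PP^1_C}(V,A)$ for some $(V,A)\in\bb{End}_C$. Hence $Q$ is a quotient of the $R$-flat regular-singular connection $\wt Q:=\ga\mm{eul}_{\PP^1_R}(R\ot_CV,\id\ot A)$, the surjection being induced by $R\to C$. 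Forming the fibre product $\wt M:=M\tiu Q\wt Q$ in $\mc(R[x^\pm]/R)$ yields exact rows
\[
\xymatrix{0\ar[r]& tM\ar[r]&M\ar[r]^q&Q\ar[r]&0\\ 0\ar[r]& tM\ar[u]^\sim\ar[r]&\wt M\ar@{}[ru]|\square\ar[u] \ar[r]&\ar@{->>}[u] \wt Q\ar[r]&0,}
\]
with $\wt M\to M$ surjective. Since $(tM)_{\mathrm{tors}}=0$ (immediate from $r(M)=1$) and $\wt Q$ is $R$-torsion-free, $\wt M$, an extension of an $R$-torsion-free module by an $R$-torsion-free module over the domain $R$, is $R$-flat; and, being a subobject of the regular-singular connection $M\op\wt Q$, it is regular-singular. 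Thus $\wt M\in\mcrs^\circ(R[x^\pm]/R)$ surjects onto $M$.

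For $r(M)>1$, I would set $N:=(0:t)_M$ (so $N\ne0$ and $r(N)=1$), let $q\colon M\to Q:=M/N$, and note that $t^{r(M)-1}Q_{\mathrm{tors}}=0$ since $t^{r(M)-1}M_{\mathrm{tors}}\subset(0:t)_M=N$, whence $r(Q)\le r(M)-1$; induction then furnishes $\wt Q\in\mcrs^\circ(R[x^\pm]/R)$ with a surjection $\wt Q\to Q$. The fibre product $\wt M:=M\tiu Q\wt Q$ has $\wt M\to M$ surjective, and $\wt M_{\mathrm{tors}}=N$ (again because $\wt Q$ is $R$-torsion-free), so $r(\wt M)=1$ and $\wt M$ is regular-singular as a subobject of $M\op\wt Q$. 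Applying the case $r=1$ to $\wt M$ produces $\wt M_1\in\mcrs^\circ(R[x^\pm]/R)$ surjecting onto $\wt M$, hence onto $M$. The $t$-torsion bookkeeping is verbatim that of Proposition \ref{prop.20230730}; the two steps that need genuine input are that $Q=M/tM$ is an Euler connection over $C$ (the classical Deligne equivalence over a field) and that regular-singularity on $\PP^1_R\smallsetminus\{0,\infty\}$ is stable under subobjects — the affine-line analogue of \cite[Proposition~8.3]{HdST22}, proved by the same argument: a $\na$-stable $R[x^\pm]$-submodule of $\ga(\cp)$, for $\cp$ logarithmic on $\PP^1_R$, generates on each of the two standard affine charts a $\na$-stable coherent subsheaf of $\cp$, and these glue over the overlap. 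I expect this last point to be the only place where more than a line of verification is required.
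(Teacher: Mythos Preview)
Your argument is correct, but it is far from the paper's route. The paper proves this corollary in one line: by Theorem~\ref{prop.20220607--03}(ii), any $M\in\mcrs(R[x^\pm]/R)$ is isomorphic to $\ga\mm{eul}_{\PP^1_R}(V,A)$ for some $(V,A)\in\bb{End}_R$; choosing a surjection $(V',A')\twoheadrightarrow(V,A)$ in $\bb{End}_R$ with $V'$ free, the image $\ga\mm{eul}_{\PP^1_R}(V',A')\to M$ is the desired surjection from an object of $\mcrs^\circ$. In other words, the corollary is a trivial consequence of the essential surjectivity already established via Popescu approximation and the complete case.

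Your approach instead transplants Proposition~\ref{prop.20230730} to the affine line, relying only on the classical Deligne equivalence over the field $C$ and on closure of $\mcrs(R[x^\pm]/R)$ under subobjects (and, implicitly, quotients---needed to know that $Q=M/N$ is regular-singular before invoking induction, though this follows at once from the subobject result by passing to $\cm/\cn$). What you gain is a proof that is independent of Theorem~\ref{prop.20220607--03}(ii): you never invoke Popescu or the complete case for the given $M$, only for the residue-field reduction. What you lose is brevity and the need to supply the affine-line analogue of \cite[Proposition~8.3]{HdST22}, which the paper never states. Your sketch of that analogue (intersect $N$ with $\cp(\mathds A_0)$ and $\cp(\mathds A_\infty)$ and glue) is correct, but note you should first replace $\cp$ by one with $(0:x)_{\cp(\mathds A_0)}=0$ and $(0:y)_{\cp(\mathds A_\infty)}=0$ so that the intersections make sense inside $M$.
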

\begin{proof}Any object of $\bb{End}_R$ is a quotient of an object of $\bb{End}_R^\circ$ and the result follows from Theorem \ref{prop.20220607--03}. 
\end{proof}

\section{Deligne's equivalence}
\label{sect.5}

In this section, we put things together to obtain an analogue of Deligne's equivalence in the case of a  strict Henselian discrete valuation ring. Recall that Deligne proved in \cite[Proposition~15.35]{Del87} that for any field $k$ of characteristic $0$, the functor 
\[
\bb r: \mcrs(k[x^\pm]/k)
\longrightarrow\mcrs(\lau kx/k) 
\]
given by base change is indeed an equivalence. 
When  $k$ is  replaced by a   $C$-algebra of the form $\pos C{t_1,\ldots,t_r}/\g a$, the analogous equivalence has been established  in \cite{HdST22}.  We want to establish an analogue in the case where $k$ is replaced by our $R$ (assumptions on it will be made as needed). 
\begin{thm}\label{thm.20220623--01}
Let $R$ be Henselian $G$-ring. Then the restriction functor 
\[
\bb r: \mcrs^{\circ}(R[x^\pm]/R)\aro  \mcrs^{\circ} (\lau Rx/R)	
\] 
is an equivalence.
\end{thm}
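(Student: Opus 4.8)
The plan is to exhibit a quasi-inverse to $\bb r$ by combining the structure theorems just established on both sides. On the punctured affine line, Theorem \ref{prop.20220607--03}-(i) tells us that every object of $\mcrs^\circ(R[x^\pm]/R)$ is isomorphic to $\ga\mm{eul}_{\PP_R^1}(V,A)$ for some $(V,A)\in\bb{End}_R^\circ$; on the formal punctured disk, Theorem \ref{thm.230704} (together with Theorem \ref{thm.2022070-8--01}) tells us that every object of $\mcrs^\circ(\lau Rx/R)$ is isomorphic to $\ga\mm{eul}_{\pos Rx}(W,B)$ for some $(W,B)\in\bb{End}_R^\circ$. The compatibility $\bb r\circ\ga\mm{eul}_{\PP_R^1}\simeq\ga\mm{eul}_{\pos Rx}$ is immediate from the definitions of the two Euler functors (both are $\co\ot_RV$ with derivation $\vt\ot\id+\id\ot A$, and $\bb r$ is just the localization $\co_{\PP^1_R}\rightsquigarrow\lau Rx$). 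So we have a commuting triangle of functors from $\bb{End}_R^\circ$, with the two slanted legs essentially surjective; what remains is to control morphisms.

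The key point is therefore a \emph{full faithfulness} statement for the composite, relative to a good choice of $\tau$: assume $0\in\tau$ and restrict to the full subcategory $\bb{End}_R^{\circ,\tau}$ of those $(V,A)$ for which the spectrum of $A$ on $V/\g r$ lies in $\tau$. By Theorem \ref{thm.2022070-8--01}, $\ga\mm{eul}_{\pos Rx}$ is \emph{full} (hence an equivalence onto its essential image) on this subcategory. I would prove the analogous fullness statement for $\ga\mm{eul}_{\PP_R^1}$ on $\bb{End}_R^{\circ,\tau}$ by the same mechanism: given $(V,A),(W,B)\in\bb{End}_R^{\circ,\tau}$, form $(H,T)=(\mm{Hom}_R(V,W),\,h\mapsto hB^{-1}\!...)$—more precisely $T(h)=Bh-hA$—so that $\hh{\mc}{\ga\mm{eul}_{\PP_R^1}(V,A)}{\ga\mm{eul}_{\PP_R^1}(W,B)}$ is the module of horizontal sections of $\ga\mm{eul}_{\PP_R^1}(H,T)$ over $R[x^\pm]$. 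A horizontal section is a Laurent polynomial $\bm h=\sum_i\bm h_i x^i$ with $T\bm h_i=-i\bm h_i$; since $\mm{Sp}_{T_k}=\mm{Sp}_{T_0}\subset(\tau-\tau)$ contains no nonzero integer (differences of elements of $\tau$), exactly as in the proof of Theorem \ref{thm.2022070-8--01} each $\bm h_i$ with $i\neq0$ dies modulo every $\g r^{k+1}$ hence is $0$, so $\bm h=\bm h_0$ comes from $\mm{Hom}_{\bb{End}_R}((V,A),(W,B))$. Thus $\ga\mm{eul}_{\PP_R^1}$ and $\ga\mm{eul}_{\pos Rx}$ are both fully faithful on $\bb{End}_R^{\circ,\tau}$, and since $\bb r$ intertwines them, $\bb r$ is fully faithful on the essential image of $\ga\mm{eul}_{\PP_R^1}|_{\bb{End}_R^{\circ,\tau}}$.

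It remains to see that this essential image is all of $\mcrs^\circ(R[x^\pm]/R)$: given $M\in\mcrs^\circ(R[x^\pm]/R)$, Theorem \ref{prop.20220607--03}-(i) gives $M\simeq\ga\mm{eul}_{\PP_R^1}(V,A)$ with $(V,A)\in\bb{End}_R^\circ$, and after twisting by a suitable integer power of $x$ (equivalently, replacing $A$ by $A-n\id$ on the relevant generalized-eigenspace summands, using Lemma \ref{lem.202307--01}, i.e.\ repeating the shearing argument of Proposition \ref{thm.20220607--04} on the $\bb{End}_R$ side) we may arrange the spectrum of $A$ on $V/\g r$ to lie in $\tau$, without changing the isomorphism class in $\mcrs^\circ(R[x^\pm]/R)$—because tensoring by the rank-one object $\ga\mm{eul}(\,R, n\id)$ over $R[x^\pm]$ is an autoequivalence. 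Hence $\bb r$ is fully faithful and essentially surjective: by Theorem \ref{thm.230704}, every object of $\mcrs^\circ(\lau Rx/R)$ is $\ga\mm{eul}_{\pos Rx}(W,B)$ with $\mm{Exp}\subset\tau$, i.e.\ lies in the image of $\bb r$ applied to $\ga\mm{eul}_{\PP_R^1}(W,B)$. The main obstacle I anticipate is not any single step but the bookkeeping of Euler forms and exponents: making sure that the \emph{same} $\tau$ works on both sides, that the shearing on $\bb{End}_R$ is compatible with the restriction functor, and that ``twist by a power of $x$'' is literally realized by an autoequivalence of each category so that normalizing exponents into $\tau$ costs nothing—once these are pinned down, the full-faithfulness argument is the short computation above, borrowed verbatim from Theorem \ref{thm.2022070-8--01}.
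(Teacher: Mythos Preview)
Your proposal is correct, and for essential surjectivity it coincides with the paper's argument. For fullness, however, you take a genuinely different route. The paper does \emph{not} normalize exponents into $\tau$ or compute horizontal sections over $R[x^\pm]$; instead, after writing both objects as Euler connections via Theorem \ref{prop.20220607--03}-(i), it represents a morphism $\ph$ in $\mcrs(\lau Rx/R)$ by a matrix $\Ph$ with entries in $\lau Rx$, base-changes to the completion $\wh R$, invokes the known equivalence \cite[Theorem 10.1]{HdST22} over $\wh R$ to force $\Ph\in\mm{Mat}(\wh R[x^\pm])$, and concludes from $\lau Rx\cap\wh R[x^\pm]=R[x^\pm]$ that $\Ph$ already had Laurent-polynomial entries over $R$.

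Your approach trades this appeal to the completion and to \cite{HdST22} for a direct spectral computation: you show both Euler functors are fully faithful on $\bb{End}_R^{\circ,\tau}$ by the same ``$T\bm h_i=-i\bm h_i$ forces $\bm h_i=0$ for $i\neq0$'' argument as in Theorem \ref{thm.2022070-8--01}, and then shear on the $\bb{End}_R$ side (via Lemma \ref{lem.202307--01} and twisting by $x^k$ on each summand) to ensure every object lies in the image of $\bb{End}_R^{\circ,\tau}$. This is more self-contained---it avoids leaning on the complete case for the morphism part---and makes transparent that both $\mcrs^\circ$ categories are equivalent to $\bb{End}_R^{\circ,\tau}$; the paper's argument is shorter but imports the full strength of the complete case. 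Both are valid; the bookkeeping you flag (same $\tau$ on both sides, twists by $x^k$ being autoequivalences of $\mcrs^\circ(R[x^\pm]/R)$, direct summands of a free module over a local ring being free) all goes through without incident.
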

\begin{proof}
{\em Essential surjectivity.}
This follows from Theorem \ref{thm.2022070-8--01} without much difficulty since $\bb r\ga\mm{eul}_{\PP_R^1}(V,A)=\ga\mm{eul}_{\pos Rx}(V,A)$.

{\it Faithfulness.} This is simple, as for any $N\in\mcrs^\circ(R[x^\pm]/R)$, the natural map 
\[
R[x^\pm]\ot_{R[x^\pm]}N \aro \lau Rx\ot_{R[x^\pm]}N\] is injective.

{\it Fullness.} By Theorem \ref{prop.20220607--03}-(i), we need to prove the following. Let $(V,A)$ and $(W,B)$ be objects of $\bb{End}_R^\circ$. Then the natural map 
\[
\mathrm{Hom}(\ga\mm{eul}_{\PP_R^1}(V,A)\,,\,\ga\mm{eul}_{\PP_R^1}(W,B))\aro \mathrm{Hom}(\ga\mm{eul}_{\pos Rx}(V,A)\,,\,\ga\mm{eul}_{\pos Rx}(W,B))
\]
is surjective. Fixing bases $\{v_i\}_{i=1}^m$, resp. $\{w_i\}_{i=1}^n$,
of $V$, resp. $W$, over $R$, any 
\[
\ph\in\mathrm{Hom}(\ga\mm{eul}_{\pos Rx}(V,A)\,,\,\ga\mm{eul}_{\pos Rx}(W,B)
\]
is defined by an $n\ti m$ matrix $\Ph$ with coefficients in $\lau Rx$. On the other hand, after  base-changing  to $\lau{\wh R}x$, \cite[Theorem 10.1]{HdST22} 
tells us that $\Ph\in\mm{Mat}_{n\ti m}(\wh R[x^\pm])$. Since $\lau Rx\cap \wh R[x^\pm]= R[x^\pm]$, we are done.
\end{proof}

As as consequence, we obtain a ``full'' Deligne equivalence as follows.
\begin{cor}\label{mainthm}
	\label{cor.deligne.equivalence.hensel}
If $R$ is a Henselian discrete valuation ring,	then the restriction functor
	\[	\bb r: \mcrs(R[x^\pm]/R)\aro  
	\mcrs(\lau Rx/R)	\] 
	is an equivalence.
\end{cor}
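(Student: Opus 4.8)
The strategy is to upgrade the equivalence of Theorem~\ref{thm.20220623--01} from the flat (``$\circ$'') subcategories to the full categories $\mcrs$, using the fact that every object of $\mcrs(R[x^\pm]/R)$, resp. $\mcrs(\lau Rx/R)$, is a cokernel of a morphism between flat objects (Corollary~\ref{18.09.2023--1} and Proposition~\ref{prop.20230730}). Since a Henselian DVR of equal characteristic $0$ is automatically a $G$-ring, Theorem~\ref{thm.20220623--01} applies and gives an equivalence $\bb r\colon\mcrs^\circ(R[x^\pm]/R)\to\mcrs^\circ(\lau Rx/R)$. The restriction functor $\bb r$ on the bigger categories is exact (it is a base change along the flat map $R[x^\pm]\to\lau Rx$, which is exact, and it clearly carries models to models, hence preserves regular-singularity), and it sends flat objects to flat objects. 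So the situation is exactly that of a right-exact functor between abelian categories that restricts to an equivalence on the full subcategories of ``projective-like'' generators, and one wants to conclude it is an equivalence on the whole categories.

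\textbf{Faithfulness and fullness.} First I would verify \emph{faithfulness}: given $N\in\mcrs(R[x^\pm]/R)$, the map $N\to\lau Rx\ot_{R[x^\pm]}N$ need not be injective in general (there is $t$-torsion), so faithfulness is not as immediate as in the flat case. Instead I would argue as follows. Let $g\colon N\to N'$ be a morphism in $\mcrs(R[x^\pm]/R)$ with $\bb r(g)=0$. Choose by Corollary~\ref{18.09.2023--1} a surjection $p\colon P\twoheadrightarrow N$ with $P\in\mcrs^\circ(R[x^\pm]/R)$. Then $\bb r(g\circ p)=\bb r(g)\circ\bb r(p)=0$, and $g\circ p$ is a morphism out of a flat object; composing further with a flat presentation of $N'$ and using that $\bb r$ is faithful on $\mcrs^\circ$ together with the fact that $\bb r(P)\to\bb r(N)$ is surjective (base change is right exact), one deduces $g=0$. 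For \emph{fullness}: given $h\colon\bb r(N)\to\bb r(N')$, pick flat presentations $P_1\to P_0\to N\to 0$ and $Q_1\to Q_0\to N'\to 0$ in $\mcrs(R[x^\pm]/R)$ with all $P_i,Q_i$ flat (this uses Corollary~\ref{18.09.2023--1} applied twice, to $N$ and to the kernel of $P_0\to N$, which is again an object of $\mcrs$ by Proposition~\ref{rmk.20220607--01} and \cite[Prop.~8.3]{HdST22}). Apply $\bb r$: since $\bb r$ is right exact, $\bb r(P_1)\to\bb r(P_0)\to\bb r(N)\to 0$ is exact, so $\bb r(P_0)$ is a flat object mapping onto $\bb r(N)$. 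Lift $h$ along the flat presentations to a morphism of complexes $\bb r(P_\bullet)\to\bb r(Q_\bullet)$, use fullness of $\bb r$ on $\mcrs^\circ$ (Theorem~\ref{thm.20220623--01}) to realize each component by a morphism $P_i\to Q_i$ over $R[x^\pm]$ \emph{after checking the compatibility squares commute}, which is where faithfulness on $\mcrs^\circ$ is used; then pass to cokernels to obtain the desired $g\colon N\to N'$ with $\bb r(g)=h$.

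\textbf{Essential surjectivity.} Given $M\in\mcrs(\lau Rx/R)$, use Proposition~\ref{prop.20230730} to write a flat presentation $E_1\arou{\Psi}E_0\to M\to 0$ in $\mcrs(\lau Rx/R)$ with $E_0,E_1\in\mcrs^\circ(\lau Rx/R)$. By essential surjectivity of $\bb r$ on the flat subcategories (Theorem~\ref{thm.20220623--01}, or directly Theorem~\ref{prop.20220607--03}-(ii)), there are $N_0,N_1\in\mcrs^\circ(R[x^\pm]/R)$ with $\bb r(N_i)\cong E_i$, and by fullness of $\bb r$ on $\mcrs^\circ$ the morphism $\Psi$ descends to $\Psi_0\colon N_1\to N_0$ over $R[x^\pm]$. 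Set $N:=\operatorname{Coker}\Psi_0\in\mcrs(R[x^\pm]/R)$ (it is regular-singular: take the cokernel of the corresponding map of logarithmic models over $\PP^1_R$). Applying the right-exact functor $\bb r$ gives $\bb r(N)=\operatorname{Coker}\bb r(\Psi_0)\cong\operatorname{Coker}\Psi=M$, as wanted.

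\textbf{Main obstacle.} The delicate point is not any single step but the bookkeeping of the presentations: one must know that kernels/cokernels of maps between objects of $\mcrs^\circ$ are again regular-singular (so that the flat presentations live inside $\mcrs$), that $\bb r$ is genuinely right exact and carries $\mcrs^\circ$ into $\mcrs^\circ$, and that the lifted morphisms between presentations can be chosen compatibly with the differentials. I expect the hardest part to be the \emph{fullness} argument, specifically checking that the square witnessing ``$h$ comes from a chain map'' can be filled in over $R[x^\pm]$ and not merely over $\lau Rx$ --- this is exactly the content of Theorem~\ref{thm.20220623--01}'s fullness clause combined with faithfulness, together with the diagram chase that produces a well-defined map on cokernels independent of the chosen lifts. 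Everything else is a routine application of the five lemma / cokernel functoriality in abelian categories.
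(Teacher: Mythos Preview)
Your overall architecture---reduce to the flat case via presentations---matches the paper's, and your essential surjectivity argument is correct and essentially the same as the paper's (the paper phrases it via Corollary~\ref{cor.12.07.22-1} directly, but that amounts to the same thing). However, there is a genuine gap in your fullness argument.

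The problem is the step ``Lift $h$ along the flat presentations to a morphism of complexes $\bb r(P_\bullet)\to\bb r(Q_\bullet)$''. Concretely, you need to lift the composite $\bb r(P_0)\to\bb r(N)\xrightarrow{h}\bb r(N')$ through the surjection $\bb r(Q_0)\twoheadrightarrow\bb r(N')$, and this would require $\bb r(P_0)$ to be a \emph{projective} object of $\mcrs(\lau Rx/R)$. But flat objects are not projective in connection categories: already over $C$, the category $\mcrs(\lau Cx/C)$ has nontrivial $\mathrm{Ext}^1$ between one-dimensional objects (think of a Jordan block as a nonsplit self-extension of the trivial connection). Theorem~\ref{thm.20220623--01} only lets you \emph{descend} morphisms between flat objects once they exist over $\lau Rx$; it does not manufacture the lift you need.

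The paper sidesteps this by never attempting to lift. For $M$ flat and $N$ arbitrary, it chooses a flat cover $q:N'\twoheadrightarrow N$ and forms the \emph{pullback} $\g M'=\bb rM\times_{\bb rN}\bb rN'$ inside $\mcrs(\lau Rx/R)$. The point is that $\g M'$ is again $R$-flat, because $R$ is a DVR: $\g M'$ is an extension of $\bb rM$ by $\ker(\bb rq)$, both of which are $t$-torsion-free, so $(0:t)_{\g M'}=0$. Hence $\g M'$ together with its two projections descends to $R[x^\pm]$ by Theorem~\ref{thm.20220623--01}, and a short diagram chase using faithfulness produces the desired arrow $M\to N$. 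The case of arbitrary $M$ is then reduced to this one by a flat cover of $M$.

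Your faithfulness argument is also more involved than necessary, and the phrase ``composing further with a flat presentation of $N'$'' does not make sense as written, since such a presentation maps \emph{into} $N'$ rather than out of it. The paper instead observes that $\bb r$ is exact (flat base change) and kills no nonzero object: by Theorem~\ref{prop.20220607--03}-(ii) every object is an Euler connection $\ga\mm{eul}_{\PP^1_R}(V,A)$, and $\bb r$ sends this to $\ga\mm{eul}_{\pos Rx}(V,A)$, which is nonzero whenever $V\not=0$. Faithfulness then follows from the standard criterion for exact functors between abelian categories.
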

\begin{proof}
	 It is clear that Corollary \ref{cor.12.07.22-1} implies that the functor $\bb r$ is {\it essentially surjective}. Further, using Theorem \ref{prop.20220607--03}, it sends non-zero objects
to non-zero ones. Since the mapping $R[x^\pm]\to \lau Rx$ is flat, the functor 
is exact and the standard criterion to verify {\it faithfulness} is assured.

To end the proof, we establish {\it fullness} by following the idea behind the proof of Proposition \ref{prop.20230730}. Let then $M$
 and $N$ be objects of $\mcrs(R[x^\pm]/R)$ and let 
\[
\bb r_{M,N}:\mm{Hom}_\mc( M , N )\aro\mm{Hom}_\mc(\bb r M ,\bb r N )\]
be the map which we want to show is surjective. 
We proceed in several steps. Let $\ph\in\mm{Hom}_\mc( M ,N)$. 

{\it First case:  $M$ and $N$ are $R$-flat.} Surjectivity of   $\bb r_{M,N}$ was verified in Theorem \ref{thm.20220623--01}.

{\it Second case: $M$ is  $R$-flat.}
Let $q:N'\to N$ be a surjection with $N'\in\mcrs^\circ(R[x^\pm]/R)$, see Corollary \ref{18.09.2023--1}.
We then construct the commutative diagram with exact rows
\[
\xymatrix{
0\ar[r]& \g M'' \ar[r]^U\ar[d]^\sim&  \g M' \ar[r]^{P} \ar@{}[rd]|\square\ar[d]_{\psi}&\bb rM\ar[r]\ar[d]^\varphi &0\\ 
0\ar[r]& \bb r N''\ar[r]_{\bb rv}&\bb rN' \ar[r]_{\bb r q}&\bb r N\ar[r]&0,}
\]
where the rightmost square is cartesian. (We slightly abuse notation and denote by $\bb r$ the map between sets of morphisms if no confusion is likely.)
It follows that $(0:t)_{\g M'}=0$, so that $\g M'=\bb r M'$  for some $M'\in\mcrs^\circ( R[x^\pm]/R)$ and $\ps=\bb r (\Ps)$, and $U=\bb ru$, by the preceding case.
 It is also true that $P=\bb rp$, so that we conclude that $\ph$ belongs to the image of $\bb r_{M,N}$. 

{\it Most general case.} Let $p:M'\to M$ be a surjection in $\mcrs(R[x^\pm]/R)$
with $M'\in\mcrs^\circ(R[x^\pm]/R)$ (cf. Corollary \ref{18.09.2023--1}). From the first case, there exists an arrow $\Ps:M'\to N$ such that 
\[
\xymatrix{
\bb rM'\ar[dr]_{\bb r\Ps}\ar[r]^{\bb rp}&\bb rM\ar[d]^\ph
\\
&\bb rN}
\]
commutes. Now, let $\iota:M''\to M'$ be the kernel of $p$ and note that $\bb r( \Ps)\bb r(\iota)=0$ and hence $\Ps\iota=0$. Let $\Ph:M\to N$ be the unique arrow 
rendering commutative the following diagram:
\[
\xymatrix{
M'\ar[r]^\Ps\ar[dr]_p&N
\\
&M\ar[u]_\Ph.
}
\]Because $\bb rp$ is an epimorphism, we conclude that $\bb r\Ph=\ph$. 
\end{proof}


\begin{center} 
\bf Acknowledgment
\end{center}
We would like to thank the anonymous referee for his/her careful reading, pointing
out a mistake in a previous version, and 
constructive suggestions leading to a significant improvement of our work.
\bibliographystyle{alpha}

\begin{thebibliography}{HdST22}


\bibitem[And02]{And02} 
Y.~Andr\'e.
\newblock 
 Hasse-Arf filtrations and p-adic monodromy
 \newblock
{\em 
Invent. Math.},  148, no. 2, 285--317, 2002.


\bibitem[Del87]{Del87}
P.~Deligne.
\newblock Le groupe fondamental de la droite projective moins trois points. In
  Galois groups over $\QQ$.
\newblock {\em Math. Sci. Res. Inst. Publ.}, 16:79--297, 1987.

\bibitem[DH18]{DH18}
N.~D. Duong and P.~H. Hai.
\newblock Tannakian duality over Dedekind rings and applications.
\newblock {\em Mathematische Zeitschrift}, 288(5):1103--1142, 2018.

\bibitem[EGA]{ega}\newblock A. Grothendieck,   in collaboration with J. Dieudonn\'e,
\newblock \'El\'ements de g\'eom\'etrie alg\'ebrique. 
\newblock  {\em Publ.  Math. IH\'ES,}   8, 11 (1961), 17 (1963), 20 (1964), 24 (1965), 28 (1966), 32 (1967).






\bibitem[Gie75]{Gie75}
D.~ Gieseker. 
\newblock Flat vector bundles and the fundamental group in non-zero characteristics. 
\newblock {\em Ann. Scuola Norm. Sup. Pisa Cl. Sci.}, (4) 2, no. 1, 1--31, 1975.


\bibitem[HdST23]{HdST22}
P.H. Hai, J.P. dos Santos, and P.T. Tam.
\newblock Algebraic theory of regular-singular connections with parameters.
\newblock To appear in {\em  Rend.   Seminario Mat.   Universit\`a    Padova}, 2023. 
\newblock DOI 10.4171/RSMUP/134. 



\bibitem[Kat87]{Kat87}
N.~ Katz.
\newblock On the calculation of some differential Galois groups. 
\newblock {\em Invent. Math.},  87, no. 1, 13--61, 1987.

\bibitem[Kin15]{Kin15}
L.~Kindler, 
\newblock Local-to-global extensions of $D$-modules in positive characteristic.
\newblock
{\em Int. Math. Res. Not.}, no. 19, 9139--9174, 2015.

\bibitem[Mat02]{Mat02}
S.~Matsuda. 
\newblock Katz correspondence for quasi-unipotent overconvergent isocrystals. 
\newblock{em Compositio Math.}, 134, no. 1, 1--34, 2002.
\bibitem[Mat70]{mat2} H. Matsumura, Commutative Algebra.  Benjamin, New York, 1970. 
\bibitem[Mat86]{Mat86}
H.~Matsumura.
\newblock {\em Commutative ring theory}.
\newblock Cambridge Studies in Advanced Mathematics \textbf{8}. Cambridge
  University Press, 1986.



\bibitem[Mi80]{milne}
J. S. Milne, {\it \'Etale Cohomology}. Princeton Math. Series, 33. Princeton U. Press, Princeton, N.J., 1980. 

\bibitem[Pop86]{Pop86}
D.~Popescu.
\newblock General N\'eron desingularization and approximation.
\newblock {\em Nagoya Math. J.}, 104:85--115, 1986.


\bibitem[dS09]{dS09} J. P. dos Santos.
\newblock The behaviour of the differential Galois group on the generic and special fibres: A
Tannakian approach.
\newblock{\em J. Reine Angew. Math.,} 637: 63--98, 2009.


\bibitem[Spi99]{spivakovsky99}M. Spivakovsky, {\it A new proof of D. Popescu's theorem on smoothing of ring homomorphisms}. Jour. of the American Math. Soc. Volume 12, Number 2, 1999, 381--444.




\bibitem[Was76]{Was76}
W.~Wasow.
\newblock {\em Asymptotic expansions for ordinary differential equations.
  Reprint of the first edition with corrections}.
\newblock Pure and applied mathematics \textbf{14}. Robert E. Krieger
  Publishing Co. Huntington, New York, 1976.

\end{thebibliography}

\end{document}